\documentclass[12pt]{amsart}
\usepackage{amssymb}
\usepackage{graphics}
\usepackage{latexsym}
\usepackage{amsmath}
\usepackage{amssymb}
\usepackage{amscd}
\usepackage{multicol}
\usepackage[cmtip,matrix,arrow]{xy}
\usepackage{hyperref}

\newtheorem{thm}{Theorem}[section]
\newtheorem{prop}[thm]{Proposition}
\newtheorem{cor}[thm]{Corollary}
\newtheorem{lem}[thm]{Lemma}

\theoremstyle{definition}

\newtheorem{ex}[thm]{Example}

\theoremstyle{remark}
\newtheorem{rem}[thm]{Remark}

\newcommand{\RR}{\mathbb R}
\newcommand{\ZZ}{\mathbb Z}

\newcommand{\CC}{\mathbb C}

\newcommand{\KK}{\mathbb K}

\newcommand{\vk}{\overline{k}}

\newcommand{\A}{{\mathcal A}}

\newcommand{\Fl}{{\mathrm{F\ell}}}

\newcommand{\be}{\A, \vk}
\newcommand{\bet}{\tilde{\beta}}

\begin{document}

\address{Department of Mathematics and Statistics\\ University of Regina\\ Canada}

\email{liviu.mare@gmail.com}

\title{Flag manifolds, spaces of frames, and connectedness properties}

\author{Augustin-Liviu Mare}


\begin{abstract} For integers $1\le k < n$ and real numbers $c_1, \ldots, c_k>0$ and $d_1, \ldots, d_n\ge 0$, we investigate the space 
of all $k\times n$ matrices $F$ such that the product $FF^*$ is equal to the diagonal matrix ${\rm Diag}(c_1, \ldots, c_k)$ and the squared norms of the columns of $F$ are equal to $d_1, \ldots, d_n$ respectively. Depending on the field where the coefficients of $F$ are taken from, that is, $\RR$ or $\CC$, we are mainly interested in determining whether the resulting space is path-connected or even simply connected relative to the subspace topology in ${\rm M}_{k,n}(\RR)$ or ${\rm M}_{k,n}(\CC)$. The criteria presented are closely related to results previously obtained by Cahill, Mixon, and Strawn (2017), Needham and Shonkwiler (2021), the last two authors together with Caine (2026), and the author of this work (2024 and 2026). Flag manifolds, namely  orbits of the canonical conjugation actions of ${\rm O}(n)$ and ${\rm U}(n)$ on the spaces of symmetric and Hermitian $n\times n$ matrices, respectively, play a central role in our development.\end{abstract}
\maketitle

\section{The main results} \label{sec:mainres}

Let $1\le k <n$ be integers and let $\KK$ be $\RR$ or $\CC$. A {\it frame} of $n$ vectors in $\KK^k$ is a $k\times n$ matrix whose columns span $\KK^k$ over $\KK$. For background material concerning this notion we refer to the monograph \cite{Wa}. 
 In this paper we study the space of frames with prescribed frame operator and prescribed norms of the columns. More specifically,  let $c_1, \ldots, c_k $ be strictly positive and  $d_1, \ldots, d_n$ non-negative real numbers. 
Denote by $\vec{d}$ the vector $(d_1, \ldots, d_n)$ and by $S$ the diagonal\footnote{In theory, the space ${\mathcal F}_{S, \vec{d}}^\KK$ can be defined for any positive-definite symmetric, respectively Hermitian, matrix $S$; however, by orthogonal, respectively unitary, diagonalization, its study reduces immediately to the case in which $S$ is diagonal.} matrix ${\rm Diag}(c_1, \ldots, c_k)$.  
By  ${\mathcal F}_{S, \vec{d}}^\KK$ we denote the space consisting of all $k\times n$ matrices $F=[f_1 | \ldots | f_n]$, where
$f_1, \ldots, f_n \in \KK^k$ are column vectors, such that
\begin{align*}
{}& FF^*=S,\\
{}& \|f_1\|^2=d_1, \ldots , \|f_n\|^2=d_n,
\end{align*}
$F^*$ being the adjoint matrix of $F$ (i.e., the transposed for $\KK=\RR$, respectively transposed followed by complex conjugate for $\KK=\CC$), and $\| \cdot \|$ the norm associated with the standard Euclidean, respectively Hermitian, inner product. 

We are interested here  in the topology of various classes of spaces of the type ${\mathcal F}_{S, \vec{d}}^\KK$ relative to the obvious subspace topology.   Historically speaking, the first motivation of our work is the proof of the so-called ``frame homotopy conjecture", obtained by Cahill, Mixon, and Strawn in \cite{CMS}, which asserts that if $S={\rm I}_k$ and $\|f_1\|=\cdots = \|f_n\|$, then the resulting ${\mathcal F}^\CC$ is always path-connected and the resulting ${\mathcal F}^\RR$ is path-connected if $2\le k \le n-2$. Refinements and generalizations were afterwards obtained by Needham and Shonkwiler in \cite{NS}, the same two authors together with Caine in \cite{CNS}, and Mare in \cite{Ma1} and \cite{Ma2}.

In \cite{Ma1} was investigated the space ${\mathcal F}^\RR_{{\rm I}_k, \vec{d}}$, whose elements are called Parseval frames.  It turns out that this space is non-empty exactly when $\vec{d}=(d_1, \ldots, d_n)$  lies  in the polytope $\Delta_{n,k}$ described by the conditions $0\le d_i \le 1$, $1\le i \le n$,  and $d_1+\cdots + d_n=k$. The first main result of this paper is as follows:

\begin{thm} \label{fdn} Assume that $\vec{d}\in \Delta_{n,k}$ satisfies the following condition:
\begin{equation} \label{eqndi}d_{i_1} + \cdots + d_{i_{n-k}}\ge 1 \ {\it for \ any \ pairwise \ distinct \  }  i_1, \ldots, i_{n-k} \in \{ 1, \ldots, n\}.\end{equation} Then  ${\mathcal F}^\RR_{{\rm I}_k, \vec{d}}$ is a path-connected subspace of 
${\rm M}_{k\times n}(\RR)$.
\end{thm}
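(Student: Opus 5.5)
The approach is to identify ${\mathcal F}^\RR_{{\rm I}_k, \vec{d}}$ with a fibre of the diagonal map on a real flag manifold (a Grassmannian) and then show the fibre is connected. Given $F$ with $FF^T={\rm I}_k$, the matrix $P=F^TF$ is the orthogonal projection onto the row space of $F$, a $k$-dimensional subspace of $\RR^n$. Its diagonal is exactly $(\|f_1\|^2,\ldots,\|f_n\|^2)=\vec d$. Conversely, $F$ is recovered from $P$ up to the left action $F\mapsto gF$ of ${\rm O}(k)$, which is free. So the map $F\mapsto F^TF$ is a principal ${\rm O}(k)$-bundle
\[
{\mathcal F}^\RR_{{\rm I}_k,\vec d}\longrightarrow X_{\vec d}:=\{P\in {\rm Gr}_k(\RR^n) : {\rm diag}(P)=\vec d\},
\]
where ${\rm Gr}_k(\RR^n)$ is viewed as the ${\rm O}(n)$-orbit of ${\rm Diag}(1,\ldots,1,0,\ldots,0)$ in symmetric matrices. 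Path-connectedness of the total space then splits into two steps: (a) $X_{\vec d}$ is path-connected; (b) within a single fibre $\{gF : g\in {\rm O}(k)\}$, any $F$ can be joined to some $gF$ with $\det g=-1$. Since ${\rm SO}(k)$ is connected, (a) and (b) together give the result.

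For (b) I will use condition \eqref{eqndi} directly. I will look for a path in $X_{\vec d}$, or equivalently a path in ${\mathcal F}$, that starts at $F$ and ends at $rF$, where $r$ is a reflection of $\RR^k$. The natural tool is a local ``rotation'' move on two columns $f_i,f_j$: rotate the pair inside their span while keeping the norms fixed and keeping $\sum_l f_lf_l^T$ unchanged. Such a move is available when the relevant $2\times 2$ Gram data permit it, as in the eigensteps or toric-flow techniques of \cite{CMS} and \cite{NS}. A cleaner route may be this: the complement $P^\perp={\rm I}_n-P$ has diagonal $\vec 1-\vec d$, so condition \eqref{eqndi} says every $n-k$ entries of $\vec d$ sum to at least $1$, equivalently every $k$ entries of $\vec1-\vec d$ sum to at most $n-k-1$. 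I expect this to be exactly the inequality that lets one prove there is a point of $X_{\vec d}$ whose frame has a column $f_i$ that can be rotated through a full half-turn (for instance, a point where some $f_i$ is orthogonal to all the other columns, or decomposes the frame reducibly), which produces the required sign flip.

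For (a) I plan to use the results from \cite{Ma1} on the fibres of the diagonal map ${\rm Gr}_k(\RR^n)\to\Delta_{n,k}$, which is the real Schur–Horn moment map. Where I can invoke an earlier statement about connectedness of these fibres, I will do so. Otherwise I will argue by induction on $n$: the columns with $d_i=1$ or $d_i=0$ split off, and otherwise the fibre is connected via Morse–Bott properties of the components of the moment map (Atiyah-type connectivity holds for real loci of torus actions).

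I expect step (b), producing the orientation-reversing loop from \eqref{eqndi}, to be the main obstacle. My first attempt will be an induction that removes one frame vector: decompose $\RR^k$ as a line plus a hyperplane, and use \eqref{eqndi} to guarantee that the reduced vector $\vec d'$ still satisfies the hypothesis in dimension $k-1$ with $n-1$ vectors.
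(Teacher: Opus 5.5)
\textbf{The proposal has a genuine gap: neither step (a) nor step (b) is proved, and \eqref{eqndi} never enters at a definite point.}

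Your reduction itself is correct and is the same framework as the paper's Theorem~\ref{maininstr}(a). The map $F\mapsto F^tF$ is a principal ${\rm O}(k)$-bundle over $X_{\vec d}=\mu_{\RR}^{-1}(\vec d)$. Because ${\rm V}_k(\RR^n)$ is connected, the boundary map $\pi_1({\rm Gr}_k(\RR^n))\to\pi_0({\rm O}(k))$ is onto. So your step (b) amounts to producing a loop in $X_{\vec d}$ whose image under this boundary map is non-trivial.

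For (a), your fallback that Atiyah-type connectivity holds for real loci is false in general. On the real locus the Morse--Bott indices are halved and can equal $1$. Already on ${\rm Gr}_1(\RR^2)=\RR P^1$, an interior fibre of the diagonal map consists of the two points $[\sqrt{d_1}:\pm\sqrt{d_2}]$. So connectedness of $X_{\vec d}$ really depends on $\vec d$.

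For (b), the mechanisms you suggest do not work as stated.
\begin{itemize}
\item A nonzero column orthogonal to all the others has $\|f_i\|=1$ (apply $\sum_l f_lf_l^t={\rm I}_k$ to $f_i$). This needs $d_i\in\{0,1\}$, which \eqref{eqndi} does not provide.
\item A pair move $[f_i|f_j]\mapsto[f_i|f_j]Q$, $Q\in{\rm O}(2)$, that keeps both norms admits a continuum of $Q$ only when $d_i=d_j$ and $f_i\perp f_j$.
\item A reducible splitting $F=F_1\oplus F_2$ does not by itself join $F$ to $gF$. It helps only through an inductive connectedness statement for smaller frame spaces, which you have neither formulated nor shown to inherit \eqref{eqndi}.
\end{itemize}
Your reformulation of \eqref{eqndi} is also off. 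For $|J|=k$ one has $\sum_{i\in J}(1-d_i)=\sum_{i\notin J}d_i$, so \eqref{eqndi} says every $k$ entries of $\vec 1-\vec d$ sum to at least $1$.

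\textbf{The missing idea is how \eqref{eqndi} becomes topology: in the paper it is a codimension condition.}
\begin{itemize}
\item ${\rm Gr}_k(\RR^n)$ is the real locus of the Hamiltonian system on ${\rm Gr}_k(\CC^n)$ with moment map $\mu_{\CC}-\vec d$. By \cite{BaHe} it therefore carries the Kirwan--Morse stratification of $\|\mu_{\RR}-\vec d\|^2$.
\item The open stratum of this stratification deformation retracts onto $\mu_{\RR}^{-1}(\vec d)$.
\item The real codimensions are half the complex ones, namely $\sum k_{1,s}k_{2,t}$ over pairs with $\tilde\beta_s>\tilde\beta_t$, where $\tilde\beta_s=(k_{1,s}-\sum_{i\in A_s}d_i)/n_s$.
\item Under \eqref{eqndi}, every non-open stratum has codimension at least $2$ \cite[Prop.~4.1]{Ma1}.
\end{itemize}
For instance, a two-block stratum with $\tilde\beta_1>\tilde\beta_2$ has codimension $k_{1,1}k_{2,2}$. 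This equals one only if $k_{1,1}=k_{2,2}=1$, which forces $|A_1|=n-k$. In that case $\tilde\beta_1>\tilde\beta_2$ is equivalent to $\sum_{i\in A_1}d_i<1$, exactly what \eqref{eqndi} excludes.

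Removing the minimal (hence closed) strata of codimension $\ge 2$ successively \cite[Thm.~1.11]{BW} shows that $\mu_{\RR}^{-1}(\vec d)\hookrightarrow{\rm Gr}_k(\RR^n)$ is $1$-connected. This gives your (a). Through the ladder of homotopy sequences for ${\rm V}_k(\RR^n)\to{\rm Gr}_k(\RR^n)$ and its restriction to $\mathcal F$, it also gives your (b), with no explicit sign-flipping path needed.
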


This generalizes the theorem of Cahill, Mixon, and Strawn mentioned above, and also the connectedness criteria obtained in \cite[Sect. 6]{Ma1}. It is also related to results found in \cite{CNS}: see Ex.~\ref{ex:last} below. 

Next, we let $S$ vary and set the numbers $d_1, \ldots, d_n$ to be equal to each other (their common value is then automatically equal to ${\rm Tr}(S)/n$). Denote the resulting space by ${\mathcal F}^\KK(S, n)$. We will give examples showing that this space is    not connected in general for $\KK=\RR$, see Examples \ref{ex:nonconn} and \ref{ex:nonconn0}  below. In the next theorem we present  a special choice of $S$ for which the connectedness result does hold:

\begin{thm} \label{thmconn1} Take 
$$S={\rm Diag}(\underbrace{1 , \dots, 1}_{k_1},
\underbrace{2, \dots, 2}_{k_2},
\dots,
\underbrace{{r-1}, \dots, {r-1}}_{k_{r-1}}),$$
where $r\ge 2$ and  $k_1+\cdots + k_{r-1}=k$. 
Assume that $k_i\ge 2$ for all $1\le i \le r-1$ and that $n-k\ge 2$. Then ${\mathcal F}^\RR(S, n)$ is a path-connected
  subspace of ${\rm M}_{k\times n}(\RR)$.
\end{thm}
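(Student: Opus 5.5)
Write $\lambda=\mathrm{Tr}(S)/n$ for the common column norm, so that $\|f_j\|^2=\lambda$ for every $j$. The plan is to move to the flag-manifold picture. A matrix $F\in{\mathcal F}^\RR(S,n)$ satisfies $FF^T=S$, so the rows of $S^{-1/2}F$ form an orthonormal $k$-frame in $\RR^n$. Complete it to $U\in{\rm O}(n)$. Then
$$F^TF=U^T\,{\rm Diag}(S,0_{n-k})\,U,$$
which lies in the ${\rm O}(n)$-orbit ${\mathcal O}_\lambda$ of the symmetric matrix ${\rm Diag}(1^{k_1},2^{k_2},\dots,(r-1)^{k_{r-1}},0^{n-k})$, and which has all diagonal entries equal to $\lambda$. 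Conversely, any such $X$ determines $F$ up to the row action of the stabilizer of $S$. Because the eigenvalues of $S$ are grouped, that stabilizer is ${\rm O}(k_1)\times\cdots\times{\rm O}(k_{r-1})$.

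This gives a surjection
$$\pi:{\mathcal F}^\RR(S,n)\to \mu^{-1}(\lambda,\dots,\lambda),\qquad \pi(F)=F^TF,$$
where $\mu:{\mathcal O}_\lambda\to\RR^n$ takes the diagonal. Each fiber of $\pi$ is a torsor for $G=\prod_i{\rm O}(k_i)$ acting on the left of $F$. The proof then splits into two claims.

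\textbf{(a)} The level set $\mu^{-1}(\lambda,\dots,\lambda)$ in the real flag manifold ${\mathcal O}_\lambda$ is connected.

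\textbf{(b)} Within one fiber, any two frames can be joined by a path in ${\mathcal F}^\RR(S,n)$.

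For (b), the identity component $\prod_i{\rm SO}(k_i)$ already connects frames that differ by an element of it. It then suffices to reverse the orientation of each block of rows separately, say by flipping the sign of one row in block $i$, through a path that stays in the space. For this I would use $n-k\ge2$ and $k_i\ge 2$ as follows. In the ${\rm O}(n)$ picture, the fiber over $X$ is $\{U\}$ modulo ${\rm O}(n-k)$, and we need $U$ and $D_iU$ to lie in the same component of $\pi^{-1}$ of the connected level set, where $D_i$ flips one row of block $i$. Since $k_i\ge2$, the stabilizer of $X$ in ${\rm O}(n)$ contains reflections, and one can try to move along the level set in a loop whose lift realizes the flip. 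Equivalently, the monodromy of the $G$-covering over the level set should hit all of $\pi_0(G)=(\ZZ/2)^{r-1}$.

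For (a), I would follow the approach of \cite{Ma1}, \cite{Ma2} for real flag manifolds. The torus-type action is replaced by the $(\ZZ/2)^n$-action by sign changes, and connectedness of level sets of the diagonal map is proved by induction on $n$. One uses a Morse–Bott / convexity argument of Atiyah–Guillemin–Sternberg type on real flag manifolds (Duistermaat's theorem), which gives connected fibers for the real locus once the complex fibers are connected. Here the condition $k_i\ge2$ and $n-k\ge 2$ guarantees that every eigenvalue multiplicity, including that of $0$, is at least $2$. That is exactly the hypothesis under which the real isotropy representation has no codimension-one walls, so level sets should not split.

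I expect the main obstacle to be step (b): producing the sign flips, that is, showing the covering $\pi$ has full monodromy. I would first try an explicit path. Pick $F$ in the fiber, use $n-k\ge2$ to find two orthonormal vectors in $\ker F$, and rotate a row of block $i$ through the plane they span together with another row of the same block. This should be combined with the diagonal correction from (a) so that the column norms remain equal to $\lambda$.
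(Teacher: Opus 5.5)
\textbf{Verdict: genuine gap.} Your reduction is correct and matches the paper's. Up to the homeomorphism of Lemma~\ref{lem:homeo}, $F\mapsto F^TF$ is the principal $G$-bundle $\tilde{\mathcal F}\to\mu_\RR^{-1}(\vec d)$ of Lemmas~\ref{gbundle}--\ref{princbun}. Write $L:=\mu^{-1}(\lambda,\dots,\lambda)$. Path-connectedness then amounts to (a) connectedness of $L$ together with (b) surjectivity of the monodromy $\pi_1(L)\to\pi_0(G)$. However, neither (a) nor (b) is actually established, and each needs an idea you are missing.

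\textbf{Claim (b).} No explicit path is needed. The restricted bundle sits inside $p:{\rm V}_k(\RR^n)\to\Fl^\RR$ (your ${\mathcal O}_\lambda$). The monodromy $\pi_1(\Fl^\RR)\to\pi_0(G)$ of this larger bundle is onto simply because ${\rm V}_k(\RR^n)$ is connected ($k<n$). By naturality, the monodromy over $L$ is that map precomposed with the map on $\pi_1$ induced by $L\hookrightarrow\Fl^\RR$. So full monodromy follows from $\pi_1$-surjectivity of this inclusion; this is the content of Thm.~\ref{maininstr}(a).

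Your proposed mechanism also misplaces the hypotheses. The stabilizer of $X$ is a conjugate of ${\rm O}(k_1)\times\cdots\times{\rm O}(k_{r-1})\times{\rm O}(n-k)$, which contains reflections whatever the multiplicities are. Yet Ex.~\ref{ex:nonconn0} shows the monodromy is not full when the multiplicities equal $1$: there $F$ and $-F$ lie in the same fiber but in different components. Moreover, rotating a row of $S^{-1/2}F$ toward $\ker F$ changes the diagonal of $F^TF$. Claim (a), being only a connectedness statement, supplies no ``diagonal correction'' to repair this.

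\textbf{Claim (a).} For (a), and for the $\pi_1$-surjectivity above, what you need is that $L\hookrightarrow\Fl^\RR$ is 1-connected. Your justification does not give this. Duistermaat's theorem concerns the image of the real locus, not its fibers. The principle ``real fibers are connected once the complex ones are'' is false. Complex level sets of a torus moment map are always connected, yet for $\Fl^\RR={\rm Gr}_{n-1}(\RR^n)\cong\RR P^{n-1}$ the barycentric level set consists of $2^{n-1}$ points. Your ``no codimension-one walls'' is the right intuition, but it is exactly what has to be proved.

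\textbf{How the paper proves it.} It takes Kirwan's stratification of $\Fl^\CC$ for $\|\mu-x_0\|^2$. It passes to $\Fl^\RR$ through the real Hamiltonian structure (Lemma~\ref{realstr} and \cite{BaHe}), which halves codimensions. The key step is the combinatorial estimate of Prop.~\ref{codimc}: when all multiplicities $k_1,\ldots,k_{r-1},n-k$ are at least $2$, every non-open complex stratum has codimension at least $4$. The delicate case is $m=2$, where $k_i\ge 2$ is used and the eigenvalues must be ordered as in Rem.~\ref{rem:second}. Removing the closed real strata of codimension $\ge 2$ one layer at a time then gives the 1-connected inclusion, and with it both (a) and (b). This codimension bound is the heart of the argument and has no counterpart in your proposal.
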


Again, this is a generalization of the theorem of Cahill, Mixon, and Strawn \cite{CMS}.

The space ${\mathcal F}^\CC_{S, \vec{d}}$ is path-connected (or empty) for any $S$ and $\vec{d}$, by a result of 
Needham and Shonkwiler, see \cite{NS}. However, ${\mathcal F}^\CC(S, n)$ is not always simply connected, as we will show in Ex.~\ref{ex:nonsconn}. Here, however, is a class of spaces of this type that are simply connected:

\begin{thm} \label{thmconn2} Take 
$$S={\rm Diag}(\underbrace{1 , \dots, 1}_{k_1},
\underbrace{2, \dots, 2}_{k_2},
\dots,
\underbrace{{r-1}, \dots, {r-1}}_{k_{r-1}}),$$
where $r\ge 2$ and $k_1+\cdots + k_{r-1}=k$.
Assume that $k_i\ge 2$ for all $1\le i \le r-1$ and that $n-k\ge 2$. Then ${\mathcal F}^\CC(S, n)$ is a simply connected
 subspace of ${\rm M}_{k\times n}(\CC)$.
\end{thm}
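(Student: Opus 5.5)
The plan is to realize ${\mathcal F}^\CC(S,n)$ as a fiber of a map from a flag manifold and to read off $\pi_1$ from the long exact sequence. Put $c=\mathrm{Tr}(S)/n$, so every column has squared norm $c$. For $F\in{\mathcal F}^\CC(S,n)$ the Gram matrix $G=F^*F$ is an $n\times n$ Hermitian matrix with constant diagonal $c$. Since $FF^*=S$, $G$ is isospectral with $\mathrm{Diag}(1,\dots,1,2,\dots,2,\dots,r-1,\dots,r-1,0,\dots,0)$, where each eigenvalue $i$ has multiplicity $k_i$ and $0$ has multiplicity $n-k$. Let $\mathcal O$ be this ${\rm U}(n)$-orbit, a complex flag manifold ${\rm U}(n)/({\rm U}(k_1)\times\cdots\times{\rm U}(k_{r-1})\times{\rm U}(n-k))$, and let $\mathcal O_c\subset\mathcal O$ be the set of matrices with all diagonal entries equal to $c$. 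This is the fiber of the Schur--Horn moment map $\mu:\mathcal O\to\RR^n$ over the barycenter $(c,\dots,c)$ of the permutohedron.

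The first step is to check that the map $\Phi:{\mathcal F}^\CC(S,n)\to\mathcal O_c$, $F\mapsto F^*F$, is a principal bundle. The group $K={\rm U}(k_1)\times\cdots\times{\rm U}(k_{r-1})$ is the centralizer of $S$ in ${\rm U}(k)$, and it acts by $F\mapsto AF$. Two matrices $F,F'$ with $FF^*=F'F'^*=S$ and $F^*F=F'^*F'$ differ by a unitary $A$, by polar decomposition on the row space, and this $A$ commutes with $S$. Hence the action is free and transitive on fibers. Surjectivity follows from the spectral decomposition of $G$. The long exact sequence then gives
\[\pi_1(K)\to\pi_1({\mathcal F}^\CC(S,n))\to\pi_1(\mathcal O_c)\to\pi_0(K)=0.\]
So it suffices to show two things: that $\pi_1(\mathcal O_c)=0$, and that the image of $\pi_1(K)\cong\ZZ^{r-1}$ in $\pi_1({\mathcal F}^\CC(S,n))$ is trivial.

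The second point is handled by comparing with the total frame space. ${\mathcal F}^\CC(S,n)$ sits inside the Stiefel-type space $\{F: FF^*=S\}\cong{\rm U}(n)/{\rm U}(n-k)$, which is simply connected because $n-k\ge 1$. This alone is not enough. Instead I will exhibit each generator loop $t\mapsto \mathrm{diag}(e^{2\pi i t},1,\dots)\cdot F$ in the $i$-th block explicitly contracted inside ${\mathcal F}^\CC(S,n)$. Since $k_i\ge 2$, the loop $\mathrm{diag}(e^{2\pi it},1)$ in ${\rm U}(k_i)$ is homotopic to $\mathrm{diag}(e^{\pi it},e^{\pi it})$ composed with an ${\rm SU}(k_i)$ path. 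So the generator can be represented by a loop of scalar phases on a block. Multiplying by a scalar phase on the rows can be traded for phases on the columns, since $F\mapsto FD$ with $D$ diagonal unitary preserves ${\mathcal F}^\CC(S,n)$. Column phases rotate in a torus ${\rm U}(1)^n$ acting on a space whose column phases can be contracted individually where $f_j$ has a zero block component. Making this precise is the fiddly part, and the hypothesis $n-k\ge2$ should enter here.

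The main obstacle is $\pi_1(\mathcal O_c)=0$. I would use the convexity and connectedness theory of moment-map fibers for torus actions: the $T^n$-moment map $\mu$ on the flag manifold has connected fibers (Atiyah), and the fundamental group of a fiber over a regular value relates to that of $\mathcal O$ via Morse--Bott theory of $\|\mu-\bar c\|^2$ (Kirwan). One wants the critical sets other than the minimum to have Morse index $\ge 3$, so that $\pi_1(\mu^{-1}(\bar c))\to\pi_1(\mathcal O)=0$ is an isomorphism after accounting for the $T^n$-quotient. I expect the index estimate to be where $k_i\ge2$ and $n-k\ge2$ are used, because no eigenvalue is simple. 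The difficulty is that $\bar c$ need not be a regular value. If it is not, I would fall back on the analogous real-case argument underlying Theorem \ref{thmconn1} and the results of \cite{Ma2}, replacing path-connectedness by its complex simply-connected counterpart.
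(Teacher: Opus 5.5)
Your bundle $F\mapsto F^*F$ over $\mathcal O_c=\mu^{-1}(x_0)$ is the paper's bundle. It is the restriction of the principal $K$-bundle $\{F: FF^*=S\}\cong{\rm V}_k(\CC^n)\to\mathcal O$, and your reduction to (i) $\pi_1(\mathcal O_c)=0$ and (ii) $\pi_1(K)\to\pi_1({\mathcal F}^\CC(S,n))$ being zero is correct. However, neither (i) nor (ii) is proved, and the route you sketch for (ii) fails. For generic $F$, left multiplication by a phase on one block of rows, $F\mapsto DF$ with $D={\rm Diag}(1,\dots,e^{i\theta}{\rm I}_{k_i},\dots,1)$, is not of the form $F\mapsto FD'$ with $D'$ diagonal. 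Only a global scalar $e^{i\theta}{\rm I}_k$ moves to the columns, so there is nothing to ``trade'', and no contraction of column-phase loops is actually given. The missing idea is naturality of the boundary map in the ladder of the two bundles
\[
K\to{\mathcal F}^\CC(S,n)\to\mathcal O_c,\qquad K\to{\rm V}_k(\CC^n)\to\mathcal O .
\]
Since $\pi_1({\rm V}_k(\CC^n))=0$, the boundary map $\partial:\pi_2(\mathcal O)\to\pi_1(K)$ is onto. Hence, if $\pi_2(\mathcal O_c)\to\pi_2(\mathcal O)$ is onto, so is $\partial:\pi_2(\mathcal O_c)\to\pi_1(K)$, and exactness gives that $\pi_1(K)\to\pi_1({\mathcal F}^\CC(S,n))$ is zero. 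You called the simple connectivity of the total space ``not enough'', but $\pi_2$-surjectivity of the fiber inclusion is precisely the extra input needed (this is Thm.~\ref{maininstr}(b)).

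Both (i) and that $\pi_2$-surjectivity come from a single estimate, which is the real content of the theorem and which you only state as an expectation. The estimate is that every non-open Kirwan stratum of $\|\mu-x_0\|^2$ on $\mathcal O$ has real codimension at least $4$; since these codimensions are even, your ``index $\ge 3$'' already means $\ge 4$. Removing minimal (closed) strata successively then shows that the inclusion of the open stratum into $\mathcal O$ is 3-connected. That stratum deformation retracts onto $\mu^{-1}(x_0)$ by Lerman's theorem whether or not $x_0$ is a regular value, so your regularity worry and the fallback to the real case are unnecessary. (In the paper the real case is deduced from this complex stratification, not conversely.) Proving the estimate requires three steps:
\begin{enumerate}
\item Identify the critical components as $\prod_s\Fl^\CC_{k_{1,s},\dots,k_{r,s}}(\CC^{A_s})$, where $k_{i,s}=\dim(V_i\cap\CC^{A_s})$ and $V_1,\dots,V_r$ are the eigenspaces listed by decreasing eigenvalue. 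This ordering fixes the complex structure on $\bigoplus_{i<j}{\rm Hom}_\CC(V_i,V_j)$ that gives the correct weight signs, cf.\ Rem.~\ref{rem:second}.
\item Compute the codimension as $2\sum k_{i,s}k_{j,t}$ over $i<j$ and $\kappa_s>\kappa_t$, where $\kappa_s$ is the common (pairwise distinct) value of the diagonal entries indexed by $A_s$.
\item Show that this sum is at least $2$, via \eqref{star} and a separate argument for $m=2$. That argument needs every eigenvalue multiplicity to be at least $2$, including the multiplicity $n-k$ of the eigenvalue $0$.
\end{enumerate}
That is where $n-k\ge 2$ enters, not in step (ii).
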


The strategy we will use in proving these theorems can be outlined as follows. We address the general case, of the frame space
${\mathcal F}_{S, \vec{d}}^\KK$.  First write 
$$\{c_i \mid 1 \le i \le k\} = \{\lambda_1, \ldots, \lambda_{r-1}\}$$
where $r\ge2$ and $\lambda_1, \ldots, \lambda_{r-1}$ are pairwise distinct.  Furthermore,
$$S={\rm Diag}(\underbrace{\lambda_1 , \dots, \lambda_1}_{k_1},
\underbrace{\lambda_2, \dots, \lambda_2}_{k_2},
\dots,
\underbrace{\lambda_{r-1}, \dots, \lambda_{r-1}}_{k_{r-1}}),$$
where $k_1, \ldots, k_{r-1}$ are positive integers whose sum is equal to $k$. 
For uniformity, we denote by ${\rm U}(n,\KK)$ the group of orthogonal matrices, if $\KK=\RR$, respectively of unitary matrices, if $\KK=\CC$. 
Consider 
$${\rm F}\ell^\KK={\rm F}\ell_{k_1,\ldots, k_{r-1}, n-k}^\KK
= {\rm U}(n,\KK) .{\rm Diag}(\underbrace{\lambda_1 , \dots, \lambda_1}_{k_1},
\dots,
\underbrace{\lambda_{r-1}, \dots, \lambda_{r-1}}_{k_{r-1}}, \underbrace{0 , \dots, 0}_{n-k}).$$
        The group action is in both cases given by matrix conjugation and the orbits lie in the space of symmetric matrices with real entries,
        respectively the space of Hermitian matrices (with complex entries). The orbit, known under the generic name of {\it flag manifold}\footnote{The ${\rm U}(n,\KK)$-orbit, denoted generically by $\Fl^\RR$ or $\Fl^\CC$, may vary from one section to another; its meaning will always be clear from the context.}, is diffeomorphic to the space of all sequences $V_1,\ldots, V_r$ of $\KK$-linear vector spaces of $\KK^n$, of dimensions $k_1, \ldots, k_{r-1}, n-k$, respectively, which are pairwise orthogonal relative to the standard Euclidean, respectively Hermitian inner product. As a manifold, it is diffeomorphic to the homogenenous space ${\rm U}(n, \KK)/\left({\rm U}(k_1, \KK)\times \cdots \times {\rm U}(k_{r-1}, \KK)
        \times {\rm U}(n-k, \KK)\right)$. 
        
        Denote $G:={\rm U}(k_1, \KK)\times \cdots \times {\rm U}(k_{r-1}, \KK)$, along with the obvious $G$-principal bundle       $$p:{\rm U}(n, \KK)/
         {\rm U}(n-k, \KK) \to {\rm U}(n, \KK)/\left({\rm U}(k_1, \KK)\times \cdots \times {\rm U}(k_{r-1}, \KK)
        \times {\rm U}(n-k, \KK)\right).$$
        The domain of this map can be canonically identified with the Stiefel manifold ${\rm V}_k(\KK^n)$  of all systems of $k$ orthonormal vectors in $\KK^n$.  Let $\mu_\KK: {\rm F}\ell^\KK \to \KK^n$ be the map which attaches to any matrix in 
        ${\rm F}\ell^\KK$ its diagonal components. It follows that the space ${\mathcal F}_{S, \vec{d}}^\KK$ is non-empty if and only if the pre-image 
 $\mu_\KK^{-1}(\vec{d})$ is non-empty too\footnote{Equivalently, $\vec{d}$ lies in the polytope (permutahedron) in $\RR^n$ whose vertices are the vectors of entries equal to  
 $\lambda_1$ ($k_1$ times),$\ldots, \lambda_{r-1}$ ($k_{r-1}$ times), and $0$ ($n-k$ times), these entries being arbitrarily ordered. The polytope $\Delta_{n,k}$ mentioned before in connection with ${\mathcal F}_{{\rm I}_k,\vec{d}}$ is just a special case of this.}. 
    The key ingredient we will be using is an embedding of ${\mathcal F}_{S,\vec{d}}^\KK$ into ${\rm V}_k(\KK^n)$, as a $G$-invariant subspace, such that its image under $p$ is equal to the pre-image $\mu_\KK^{-1}(\vec{d})$ (the details are presented in  Sec.~\ref{sec:relations} below). 
The resulting $G$-principal bundle ${\mathcal F}_{S,\vec{d}}^\KK \to  \mu^{-1}(\vec{d})$ is an important tool. 
First recall that if $X$ and $Y$ are topological spaces and $m\ge 1$ an integer,  a continuous map $f: X \to Y$ is $m$-connected 
if the induced maps $f_*: \pi_i(X) \to \pi_i(Y)$ are bijective for all $0\le i \le m-1$ and $f_*:\pi_m(X) \to \pi_m(Y)$ is surjective. The main tool in proving the three theorems above is the following:

\begin{thm}\label{maininstr} 
(a)   Assume that $\mu_\RR^{-1}(\vec{d})$ is non-empty and the inclusion map  $\mu_\RR^{-1}(\vec{d}) \to {\rm F}\ell^\RR$ is 1-connected. Then the space ${\mathcal F}_{S,\vec{d}}^\RR$ is a path-connected subspace of ${\rm M}_{k,n}(\RR)$.

(b)   Assume that $\mu_\CC^{-1}(\vec{d})$ is non-empty and the inclusion map  $\mu_\CC^{-1}(\vec{d}) \to {\rm F}\ell^\CC$ is 3-connected. Then the space ${\mathcal F}_{S,\vec{d}}^\CC$ is a simply connected  subspace of ${\rm M}_{k,n}(\CC)$.
\end{thm}

In the next section we will prove this theorem. We will subsequently use it to prove as follows: Thm.~\ref{thmconn2} in Sect.~\ref{complex} and Theorems~\ref{fdn} and \ref{thmconn1} in Sect.~\ref{realflag}.

\section{Relations with $\mu_\KK^{-1}(\vec{d})$} \label{sec:relations}

\subsection{The case $\KK=\RR$}\label{subsec:r}

In this subsection we will prove
Thm.~\ref{maininstr} (a). 
 Let $n, r, k, k_1, \ldots, k_{r-1}$, $S$, and $\vec{d}$  be like in Sect.~\ref{sec:mainres}. 

As usual, ${\rm V}_k(\RR^n)$ represents the Stiefel manifold of all orthonormal systems of $k$ vectors in $\RR^n$. It can be described as the space of all $k\times n$ matrices $A$ such that $AA^t={\rm I}_k$. By ${\rm F}\ell^\RR$  we have already denoted the ${\rm O}(n)$-conjugation orbit of 
$${\rm Diag}(\underbrace{\lambda_1\dots, \lambda_1}_{k_1},
\dots,
\underbrace{\lambda_{r-1}, \dots, \lambda_{r-1}}_{k_{r-1}}, \underbrace{0,\cdots, 0}_{n-k}).$$
Also recall that $$G:={\rm O}(k_1) \times {\rm O}(k_2) \times \cdots \times {\rm O}(k_{r-1}),$$
which acts on ${\rm V}_k(\RR^n)$ by matrix multiplication from the left.

\begin{lem} \label{gbundle}
 The map $p:{\rm V}_k(\RR^n) \to {\rm F}\ell^\RR$, $A\mapsto A^tSA$, is well defined and a $G$-principal bundle.
\end{lem}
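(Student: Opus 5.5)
The plan is to identify $p$ with the standard quotient map ${\rm O}(n)/{\rm O}(n-k)\to {\rm O}(n)/({\rm O}(k_1)\times\cdots\times{\rm O}(k_{r-1})\times{\rm O}(n-k))$, which is a principal bundle with structure group $G$ because $G$ is a closed subgroup.

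\emph{Well-definedness.} Given $A\in{\rm V}_k(\RR^n)$, extend it to an orthogonal matrix $Q\in{\rm O}(n)$ whose first $k$ rows are the rows of $A$; this is possible since $AA^t={\rm I}_k$. Let $\tilde S={\rm Diag}(S,0_{n-k})$ be the $n\times n$ diagonal matrix. A block computation gives $Q^t\tilde SQ=A^tSA$. Hence $A^tSA$ lies in the ${\rm O}(n)$-orbit of $\tilde S$, that is, in ${\rm F}\ell^\RR$.

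\emph{Surjectivity.} Any element of ${\rm F}\ell^\RR$ has the form $Q^t\tilde S Q$ with $Q\in{\rm O}(n)$. Taking $A$ to be the first $k$ rows of $Q$ gives $p(A)=Q^t\tilde SQ$.

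\emph{Invariance.} For $g\in G$ we have $gSg^t=S$, since $g$ is block diagonal with orthogonal blocks matching the eigenspaces of $S$. Therefore $p(gA)=A^tg^tSgA=A^tSA$, using $g^tSg=S$. It is also clear that $G$ acts freely, because $gA=A$ together with $AA^t={\rm I}$ forces $g={\rm I}$.

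\emph{Fibres.} This is the main point. Suppose $A^tSA=B^tSB$. The row space of $A$ decomposes as $\bigoplus_i W_i(A)$, where $W_i(A)$ is the span of the rows in the $i$-th block. The matrix $A^tSA$ acts on $\RR^n$ as $\lambda_i$ on $W_i(A)$ and as $0$ on the orthogonal complement of the row space, so $W_i(A)$ is exactly its $\lambda_i$-eigenspace. This uses that the $\lambda_i$ are distinct and nonzero. It follows that $W_i(A)=W_i(B)$ for every $i$. The $i$-th blocks of rows of $A$ and $B$ are then orthonormal bases of the same space, so they differ by a unique element of ${\rm O}(k_i)$. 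Hence $B=gA$ for a unique $g\in G$.

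\emph{Local triviality.} I would deduce it as follows. ${\rm O}(n)$ acts transitively on both spaces (by $A\mapsto AQ^t$ on ${\rm V}_k$ and by conjugation $X\mapsto QXQ^t$ on ${\rm F}\ell^\RR$), and $p$ is equivariant for these actions. The stabilizer of $\tilde S$ is ${\rm O}(k_1)\times\cdots\times{\rm O}(k_{r-1})\times{\rm O}(n-k)$, and the stabilizer of the standard frame $[{\rm I}_k\,|\,0]$ is ${\rm O}(n-k)$. This identifies $p$ with the quotient ${\rm O}(n)/H'\to{\rm O}(n)/H$, with $H'\subset H$ closed and $H/H'\cong G$. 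Since $H'$ is normal in $H$, the map ${\rm O}(n)/H'\to {\rm O}(n)/H$ is a principal $H/H'$-bundle; local sections of ${\rm O}(n)\to{\rm O}(n)/H$ give the trivializations. One checks that the right $H/H'$-action corresponds to the left $G$-action on frames.
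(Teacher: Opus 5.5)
Your proposal is correct and follows essentially the same route as the paper: the paper identifies ${\rm V}_k(\RR^n)$ with ${\rm O}(n-k)\backslash{\rm O}(n)$, takes $p$ to be the homogeneous-space projection ${\rm O}(n-k)Q\mapsto Q^{-1}(G\times{\rm O}(n-k))$ (a principal $G$-bundle by the general coset fact in its footnote, which amounts to your ${\rm O}(n)/H'\to{\rm O}(n)/H$ in transposed conventions), and then checks via an orthogonal completion $Q$ of $A$ that $Q^t\,{\rm Diag}(S,0)\,Q=A^tSA$. Your direct verifications that the fibres are exactly the free $G$-orbits, and your local-section argument for triviality, are extra details that the paper leaves to that general coset fact.
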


\begin{proof} 
Consider the embedding ${\rm O}(n-k)\subset {\rm O}(n)$ which identifies each $B\in {\rm O}(n-k)$ with
$$\begin{pmatrix}
{\rm I}_k & 0  \\
0 & B 
\end{pmatrix}. $$
The homogeneous space ${\rm O}(n-k)\backslash {\rm O}(n)$ can be identified with the Stiefel manifold ${\rm V}_k(\RR^n)$ by taking any coset ${\rm O}(n-k)Q$ with $Q\in {\rm O}(n)$, writing
$$Q=\begin{pmatrix}
A  \\
M 
\end{pmatrix}
$$
with $A\in {\rm M}_{k,n}(\RR)$ and $M \in {\rm M}_{n-k, n}(\RR)$, and attaching $A$. The resulting assignment is well defined and bijective.

Now, the map $p: {\rm O}(n-k)\backslash {\rm O}(n) \to {\rm O}(n)/(G \times {\rm O}(n-k))$,
$$p({\rm O}(n-k)Q)=Q^{-1}(G\times {\rm O}(n-k)), \quad Q\in {\rm O}(n),$$
is a $G$-principal bundle\footnote{In general, if $H$ is a compact Lie group and $K\times L$ a closed subgroup, then the map
$L\backslash H \to H/(K\times L)$, $Lh \mapsto h^{-1}(K\times L)$, $h\in H$, is a $K$-principal bundle.}.
But ${\rm O}(n)/(G \times {\rm O}(n-k))$ is clearly diffeomorphic to the orbit
$${\rm O}(n).\begin{pmatrix}
S& 0  \\
0 & 0 
\end{pmatrix},$$
that is, to $\Fl^\RR$. 

In this way, we have constructed a $G$-principal bundle $p: {\rm V}_k(\RR^n) \to \Fl^\RR$, which can be described as follows.
Take $A\in {\rm V}_k(\RR^n)$, regarded as a $k\times n$ matrix. Pick $M\in {\rm M}_{n-k, n}(\RR)$ such that 
$$\begin{pmatrix}
A  \\
M 
\end{pmatrix}
\in {\rm O}(n)$$
(this can be done by completing the rows of $A$ to an orthonormal basis of $\RR^n$). Then
$$A=
 {\rm O}(n-k) \begin{pmatrix}
A  \\
M 
\end{pmatrix}
$$
is mapped by $p$ to
$$
\begin{pmatrix}
A  \\
M 
\end{pmatrix}^{-1} .(G\times {\rm O}(n-k))
=\begin{pmatrix}
A  \\
M 
\end{pmatrix}^{-1} 
\begin{pmatrix}
S& 0  \\
0 & 0 
\end{pmatrix}
\begin{pmatrix}
A  \\
M 
\end{pmatrix}
=\begin{pmatrix}
A  \\
M 
\end{pmatrix}^{t} 
\begin{pmatrix}
S& 0  \\
0 & 0 
\end{pmatrix}
\begin{pmatrix}
A  \\
M 
\end{pmatrix}
=A^tSA.$$
\end{proof}

Denote now by $\tilde{\mathcal F}$  the space of all $A\in {\rm V}_k(\RR^n)$ such that 
the diagonal of $A^tSA $ is $(d_1,\ldots, d_n)$. Consider 
$$S^{-1/2}:={\rm Diag}(\underbrace{\lambda_1^{-1/2}\dots, \lambda_1^{-1/2}}_{k_1},
\underbrace{\lambda_2^{-1/2}, \dots, \lambda_2^{-1/2}}_{k_2},
\dots,
\underbrace{\lambda_{r-1}^{-1/2}, \dots, \lambda_{r-1}^{-1/2}}_{k_{r-1}}).$$

\begin{lem} \label{lem:homeo} The map $F\mapsto S^{-1/2}F$ is a homeomorphism between ${\mathcal F}_{S, \vec{d}}^\RR$ and $\tilde{\mathcal F}$. 
\end{lem}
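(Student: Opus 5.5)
The map $F\mapsto S^{-1/2}F$ is the restriction of a linear automorphism of ${\rm M}_{k,n}(\RR)$, namely left multiplication by the invertible diagonal matrix $S^{-1/2}$. Its inverse is $A\mapsto S^{1/2}A$, where $S^{1/2}$ is the diagonal matrix with entries $\lambda_i^{1/2}$ in the same block pattern. Both maps are continuous on all of ${\rm M}_{k,n}(\RR)$. So it suffices to check two things: left multiplication by $S^{-1/2}$ sends ${\mathcal F}_{S,\vec{d}}^\RR$ into $\tilde{\mathcal F}$, and left multiplication by $S^{1/2}$ sends $\tilde{\mathcal F}$ into ${\mathcal F}_{S,\vec{d}}^\RR$. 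Restrictions of mutually inverse continuous maps to subspaces that they exchange are mutually inverse homeomorphisms for the subspace topologies.

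For the first inclusion, take $F\in{\mathcal F}_{S,\vec{d}}^\RR$ and set $A=S^{-1/2}F$. Since $S^{-1/2}$ is diagonal, it is symmetric and commutes with $S$. Hence
$$AA^t=S^{-1/2}FF^tS^{-1/2}=S^{-1/2}SS^{-1/2}={\rm I}_k,$$
so $A\in{\rm V}_k(\RR^n)$. Moreover $A^tSA=F^tS^{-1/2}SS^{-1/2}F=F^tF$. The $(i,i)$ entry of $F^tF$ is $\|f_i\|^2=d_i$, so the diagonal of $A^tSA$ is $\vec d$, and therefore $A\in\tilde{\mathcal F}$.

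For the reverse inclusion, take $A\in\tilde{\mathcal F}$ and set $F=S^{1/2}A$. Then
$$FF^t=S^{1/2}AA^tS^{1/2}=S,$$
and $F^tF=A^tSA$. The diagonal entries of $F^tF$ are the squared column norms $\|f_i\|^2$, and by assumption they equal $d_i$, so $F\in{\mathcal F}_{S,\vec{d}}^\RR$. There is no real obstacle in this argument; the only point to observe is the identity $F^tF=A^tSA$, which turns the column-norm condition into the diagonal condition defining $\tilde{\mathcal F}$.
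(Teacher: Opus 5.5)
Your proof is correct and matches the paper's argument: both verify that $S^{-1/2}F$ satisfies $AA^t={\rm I}_k$ with $A^tSA=F^tF$, and that $S^{1/2}A$ satisfies $FF^t=S$ with $F^tF=A^tSA$. The only difference is that you state explicitly the continuity of the two mutually inverse linear maps, which the paper leaves implicit.
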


\begin{proof} If $F\in {\mathcal F}_{S, \vec{d}}^\RR$ then 
\begin{align*}
{}& (S^{-1/2}F)\cdot (S^{-1/2}F)^t= S^{-1/2}FF^tS^{-1/2}=S^{-1/2}SS^{-1/2}={\rm I}_k\\
{}& (S^{-1/2}F)^t S (S^{-1/2}F)=F^tF,
\end{align*}
the diagonal entries of the last matrix being $\|f_1\|^2, \ldots, \|f_n\|^2$. Thus the map indicated in the lemma is well defined.

One only needs to show that the inverse map $\tilde{\mathcal F} \to{\mathcal F}_{S, \vec{d}}^\RR$, $A\mapsto S^{1/2}A$ is well defined. 
If $A\in \tilde{\mathcal F}$ then
\begin{align*}
{}& (S^{1/2}A)\cdot (S^{1/2}A)^t= S^{1/2}AA^tS^{1/2}=S\\
{}& (S^{1/2}A)^t (S^{1/2}A)=A^tSA,
\end{align*}
and consequently the norms of the columns of $S^{1/2}A$  are $d_1, \ldots, d_n$, respectively. 
\end{proof} 
Our goal is to show that $\tilde{\mathcal F}$ is a path-connected subspace of ${\rm V}_k(\RR^n)$.

\begin{lem} \label{princbun} The map $p$ defined in Lemma \ref{gbundle} restricted to $\tilde{\mathcal F}$ induces a $G$-principal bundle between $\tilde{\mathcal F}$ and 
$\mu_\RR^{-1}(\vec{d})$.
\end{lem}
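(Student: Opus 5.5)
The plan is to show that $\tilde{\mathcal F}$ is exactly the full preimage $p^{-1}(\mu_\RR^{-1}(\vec{d}))$ under the bundle $p$ of Lemma \ref{gbundle}. Once this is done, the claim follows from a general fact: the restriction of a principal bundle to the preimage of any subspace of the base is again a principal bundle. We do not need $\mu_\RR^{-1}(\vec{d})$ to be a submanifold for this. If $U\subset \Fl^\RR$ is open and $\phi: p^{-1}(U)\to U\times G$ is a $G$-equivariant local trivialization of $p$, then $\phi$ restricts to a $G$-equivariant homeomorphism
$$p^{-1}(U\cap \mu_\RR^{-1}(\vec{d})) \to (U\cap \mu_\RR^{-1}(\vec{d}))\times G.$$
Moreover, the restricted $G$-action on $p^{-1}(\mu_\RR^{-1}(\vec{d}))$ is still free, and $G$ is still transitive on the fibres. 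So the whole argument reduces to checking the set-theoretic identity.

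That identity follows directly from the definitions. By Lemma \ref{gbundle}, $p(A)=A^tSA$. By definition, $A\in\tilde{\mathcal F}$ exactly when $A\in {\rm V}_k(\RR^n)$ and the diagonal of $A^tSA$ equals $\vec{d}$, that is, when $\mu_\RR(p(A))=\vec{d}$. Hence
$$\tilde{\mathcal F}=(\mu_\RR\circ p)^{-1}(\vec{d})=p^{-1}(\mu_\RR^{-1}(\vec{d})).$$
Surjectivity of $p|_{\tilde{\mathcal F}}$ onto $\mu_\RR^{-1}(\vec{d})$ is then automatic, because $p$ itself is surjective onto $\Fl^\RR$.

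For $G$-invariance I will also verify directly that the left action preserves $\tilde{\mathcal F}$. Let $g\in G={\rm O}(k_1)\times\cdots\times{\rm O}(k_{r-1})$, embedded block-diagonally in ${\rm O}(k)$. Then $g$ commutes with $S$, since $S$ is scalar on each block, so $g^tSg=S$. Therefore $(gA)^tS(gA)=A^t g^tSg A=A^tSA$, and also $(gA)(gA)^t={\rm I}_k$. So both $p$ and the diagonal condition are preserved under $A\mapsto gA$.

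I do not expect any step to be a real obstacle. The only point needing care is the restriction of local trivializations to a possibly singular closed subset. This is purely topological and uses only that the trivializations are $G$-equivariant homeomorphisms over open sets of the base.
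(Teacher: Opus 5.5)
Your proposal is correct and follows the same route as the paper. The paper's proof likewise notes that $p(\tilde{\mathcal F})=\mu_\RR^{-1}(\vec d)$ and checks $G$-invariance via $(RA)^tS(RA)=A^tSA$; your identification $\tilde{\mathcal F}=p^{-1}(\mu_\RR^{-1}(\vec d))$ and the restriction of the equivariant local trivializations spell out the step the paper leaves implicit.
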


\begin{proof}
It is clear that $p(\tilde{\mathcal F})= \mu_\RR^{-1}(\vec{d})$. Also, $\tilde{\mathcal F}$ is a $G$-invariant subspace of ${\rm V}_k(\RR^n)$, because if $A\in \tilde{\mathcal F}$ and $R\in G$ then
$$(RA)^tS(RA)=A^tR^tSRA=A^tR^tRSA=A^t{\rm I}_kSA=A^tSA,$$
whose diagonal is $(d_1,\ldots, d_n)$. 
\end{proof}
 
 We are now in a position to prove the main result of the subsection.
 
 \noindent {\it Proof of Thm.~\ref{maininstr} (a)}. Above we defined the $G$-principal bundle $p:{\rm V}_k(\RR^n) \to {\rm F}\ell^\RR$ and its restriction 
 $\tilde{\mathcal F} \to \mu_\RR^{-1}(\vec{d})$. Fix a point ${\rm pt.}\in \mu_\RR^{-1}(\vec{d})$ and consider its fiber, which is homeomorphic to $G$. To each of the pairs $(\tilde{\mathcal F}, G)$ and $({\rm V}_k(\RR^n), G)$ corresponds a long exact homotopy sequence. They can be combined with the homomorphisms induced by the inclusion map
 $(\tilde{\mathcal F}, G)\hookrightarrow ({\rm V}_k(\RR^n), G)$ and determine the following ``ladder" diagram:

\begin{equation}\label{exacts}
\vcenter{\xymatrix{
&
\pi_1(\tilde{\mathcal F}, G) 
\ar[r]^{ \ \ \textcircled{\small{1}}}\ar[d]^{\textcircled{\small{2}}} &
\pi_0(G) 
\ar[r] \ar[d]^{\simeq} &
\pi_0(\tilde{\mathcal F})\ar[r] \ar[d]&  \pi_0(\tilde{\mathcal F}, G)\ar[d]
\\
&
\pi_1({\rm V}_k(\RR^n), G)
\ar[r]^{ \ \ \ \ \ \textcircled{\small{3}}} &
\pi_0(G)  \ar[r] & \pi_0({\rm V}_k(\RR^n)) \ar[r] &  \pi_0({\rm V}_k(\RR^n), G)
\\
}}
\end{equation}  
where each rectangle is a commutative diagram.
This follows as an application of a general result which can be found for instance in \cite[p.~76]{Gray}. 

{\it Claim.} The map $\textcircled{\small{2}}$ is surjective.

To prove this,  consider the following commutative diagram:

\begin{equation*}\label{exactseq}
\vcenter{\xymatrix{
&
(\tilde{\mathcal F}, G) 
\ar[r]^{}\ar[d]^{} &
(\mu_\RR^{-1}(\vec{d}), {\rm pt.}) 
\ar[d]^{} 
\\
&
({\rm V}_k(\RR^n), G)
\ar[r]^{} &
({\rm F}\ell^\RR, {\rm pt.})  
\\
}}\
\end{equation*}  

\noindent in which the horizontal arrows are the bundle projection maps  and the vertical arrows are the inclusions.
By functoriality, one obtains an obvious commutative diagram which involves the fundamental groups. In this new diagram, 
the horizontal arrows are isomorphisms, see \cite[Thm.~11.8]{Gray}. The left hand side vertical arrow is just the map 
$\textcircled{\small{2}}$ above and the other vertical arrow is surjective by the theorem hypothesis. This concludes the proof of the claim.

Since $k<n$, the Stiefel manifold ${\rm V}_k(\RR^n)$ is path-connected, that is,
$
\pi_0({\rm V}_k(\RR^n))
$ consists of only one element. 
Consequently, the map \textcircled{\small{3}} is surjective. Since \textcircled{\small{2}} is surjective, commutativity of diagram (\ref{exacts}) implies that \textcircled{\small{1}} is surjective as well.
By Theorem~11.8 of \cite{Gray}, applied to the principal bundle
$
p:\tilde{\mathcal F}\longrightarrow \mu_\RR^{-1}(\vec{d}),
$
the induced map
$
p_*:\pi_0(\tilde{\mathcal F},G)\longrightarrow \pi_0(\mu_\RR^{-1}(\vec{d}))
$
is a bijection.  By the hypothesis of the theorem and since ${\rm F}\ell^\RR$ is path-connected,  $\mu_\RR^{-1}(\vec{d})$ is path-connected as well.  Thus 
$
\pi_0(\tilde{\mathcal F},G)
$
consists of only one element.
Consider now the exact sequence appearing in the top row of (\ref{exacts}),
$$
\pi_1(\tilde{\mathcal F},G)
\stackrel{\textcircled{\small{1}}}{\longrightarrow}
\pi_0(G)
\longrightarrow
\pi_0(\tilde{\mathcal F})
\longrightarrow
\pi_0(\tilde{\mathcal F},G).
$$
Since \textcircled{\small{1}} is surjective, the map
$
\pi_0(G)\longrightarrow \pi_0(\tilde{\mathcal F})
$
is constant. On the other hand, since $\pi_0(\tilde{\mathcal F},G)$ is a singleton, the map
$
\pi_0(\tilde{\mathcal F})
\longrightarrow
\pi_0(\tilde{\mathcal F},G)
$
is constant. Exactness then implies that every element of $\pi_0(\tilde{\mathcal F})$ lies in the image of
$
\pi_0(G)\longrightarrow \pi_0(\tilde{\mathcal F}).
$
Since the latter map is constant, its image consists of a single element. Therefore $\pi_0(\tilde{\mathcal F})$ consists of a single element as well. Hence $\tilde{\mathcal F}$ is path-connected, and therefore, by Lemma \ref{lem:homeo},  ${\mathcal F}_{S, \vec{d}}^\RR$  is path-connected too. 
$\square$

\subsection{The case $\KK=\CC$}

With $n, r, k, k_1, \ldots, k_{r-1}$, $S$, and $\vec{d}$ like in Sect.~\ref{sec:mainres}, we now prove point (b) of Thm.~\ref{maininstr}.
As in the real case, one has a $G$-principal bundle
$
p:{\rm V}_k(\mathbb C^n)\to \Fl^{\mathbb C},
$ where this time
$G={\rm U}(k_1)\times\cdots\times {\rm U}(k_{r-1}),
$
and its restriction
$
\tilde{\mathcal F}\to  \mu_{\mathbb C}^{-1}(\vec d),
$
where $\tilde{\mathcal F}$ is a subspace of ${\rm V}_k(\CC^n)$ homeomorphic to ${\mathcal F}_{S, \vec{d}}^\CC$. 
These principal bundles fit into a commutative diagram whose vertical maps are the natural inclusions.
Consider the corresponding ladder of long exact homotopy sequences, which is the analogue of the diagram above, this time with the homotopy groups 
shifted up by one degree. Note that ${\rm V}_k(\mathbb C^n)$ is simply connected, because $k<n$. By hypothesis, the inclusion
$
\mu_{\mathbb C}^{-1}(\vec d)\hookrightarrow \Fl^{\mathbb C}
$
is 3-connected and, in particular, induces an isomorphism on  $\pi_2$.
It follows from the commutativity of the ladder and a careful analysis  of the  sequence in the top row that 
$
\pi_1(\tilde{\mathcal F})=0.
$

 In fact, this proof has already been used, in a slightly different context,  
in \cite[Sect.~4, Proof of Theorem 1.1]{Ma2}. 

\begin{rem} The proof above only uses that the inclusion
$
\mu_{\mathbb C}^{-1}(\vec d)\hookrightarrow \Fl^{\mathbb C}
$
induces a {\it surjection} on $\pi_2$, and not necessarily an isomorphism. 
This establishes more than what is stated in Thm.~\ref{maininstr} (b): the hypothesis that the inclusion 
$\mu_\CC^{-1}(\vec{d}) \to \Fl^\CC$  be 
3-connected can be relaxed to 
2-connected. 
\end{rem}

\section{Complex flag manifolds}\label{sect1}

The rest of the paper is devoted to the proof of Theorems \ref{thmconn1} and \ref{thmconn2}. 
The main instrument is of course Thm.~\ref{maininstr}. It remains to show that the assumptions in the two cases of that theorem are satisfied. We start with a  discussion about complex flag manifolds. 

Let $n\ge 2$, $r\ge 1$, and $1\le k_1, \ldots, k_r \le n$ be integers such that $k_1+ \cdots + k_r=n$. Let ${\rm Herm}_n$ be the space of all $n\times n$ Hermitian matrices. It is acted upon by the group ${\rm U}(n)$ using matrix conjugation. In this section, by
$\Fl^\CC$ we will denote the   ${\rm U}(n)$-orbit of the  diagonal matrix 
$${\rm Diag} (\underbrace{r, \dots, r}_{k_1},
\underbrace{r-1, \dots, r-1}_{k_2},
\dots,
\underbrace{1, \dots, 1}_{k_r}).$$
One naturally identifies $\Fl^\CC$ with the space of all Hermitian $n\times n$ matrices whose eigenvalues are $r, r-1, \ldots, 1$, each with multiplicity $k_1, \ldots, k_r$ respectively. Any such matrix / linear transformation is uniquely determined by its eigenspaces, say $V_1, \ldots, V_r$; that is, its restriction to each $V_j$ is just the multiplication by $r-j+1$, for all $1\le j\le r$. This allows us to further identify $\Fl^\CC$ with the space of all sequences $V_\bullet:=(V_1, \ldots, V_r)$ of $\CC$-linear subspaces of $\CC^n$, which are pairwise orthogonal and  have dimensions $k_1, \ldots, k_r$, respectively.

By the theorem of Schur and Horn, see \cite{Schur} and \cite{Ho}, the image of the matrix-diagonal map\footnote{In this section, we will use the same notation $\mu$ to refer to $\mu_\CC$ and also to the moment map of an arbitrary Hamiltonian torus action on a symplectic manifold: the actual meaning will be clear from the context.} $\mu: \Fl^\CC \to \RR^n$ is the permutahedron $\Delta_{k_1, \ldots, k_r}^n$, that is the convex hull of the subset of $\RR^n$  consisting of the vector   
$$a=(\underbrace{r, \dots, r}_{k_1},
\underbrace{r-1, \dots, r-1}_{k_2},
\dots,
\underbrace{1, \dots, 1}_{k_r})$$
along with all vectors obtained by permuting the entries of $a$.  
Let $x_0$ be  the barycenter of $\Delta_{k_1, \ldots, k_r}^n$. The latter is clearly invariant under the action of the symmetric group $S_n$ and thus $x_0$ must be fixed by $S_n$. But the only vectors in $\RR^n$ that are fixed under $S_n$ are those whose entries are all equal. Since the sum of the entries of all vectors in $\Delta_{k_1, \ldots, k_r}^n$ is the same\footnote{In general, if $x\in \RR^n$ then all vectors in the convex hull of $\{\sigma x \mid \sigma\in S_n\}$ have the sum of the entries constant.}, one deduces that
\begin{equation}\label{eq:xzero}x_0= \frac{\sum_{i=1}^r (r-i+1)k_i}{n}\,(1,\dots,1).
\end{equation}

\begin{rem} \label{rem:first}
It may look more natural to choose ${\rm F}\ell^\CC$ as the ${\rm U}(n)$-orbit of 
$${\rm Diag} (\underbrace{1, \dots, 1}_{k_1},
\underbrace{2, \dots, 2}_{k_2},
\dots,
\underbrace{r, \dots, r}_{k_r}).$$
This  looks more attractive, for example because the barycenter of the resulting polytope would then be
$$\frac{\sum_{i=1}^r i\, k_i}{n}\,(1,\dots,1)$$
and the computations in the next sections would get a simpler form.
In fact, there is no essential difference between the two presentations, since the orbit mentioned in this remark is the same as the orbit of 
$${\rm Diag} (\underbrace{r, \dots, r}_{k_r}, \dots
\underbrace{2, \dots, 2}_{k_{2}},
\underbrace{1, \dots, 1}_{k_1}).$$
However, it will be useful later on to choose  the ${\rm U}(n)$-orbit in such a way that for any matrix belonging to it, the eigenspaces $V_1, V_2, \ldots, V_r$ correspond to the eigenvalues $r, r-1, \ldots, 1$ and not $1, 2, \ldots, r$, respectively. This will be explained in more detail in Rem.~\ref{rem:second}.
\end{rem}

\subsection{The tangent space to the flag manifold}

Recall that ${\rm F}\ell^\CC$ is a submanifold of the  vector space ${\rm Herm}_n$. 
\begin{lem} \label{tangsp} For any $V_{\bullet}= (V_1, \ldots, V_r)\in {\rm F}\ell^\CC$, the tangent space  $T_{V_{\bullet}}{\rm F}\ell^\CC$ consists of all $\CC$-linear endomorphisms of the form
$$\sum_{1\le i <j \le r} T_{ij}+T_{ij}^*$$
where $T_{ij} : V_i \to V_j$ is an arbitrary $\CC$-linear map, for all $1\le i < j \le r$.  More concisely, one can identify 
\begin{equation}\label{eq:tangent}T_{V_\bullet} {\rm F}\ell^\CC =\bigoplus_{1\le i < j \le r} {\rm Hom}_\CC (V_i, V_j)\end{equation}
as real vector spaces. Under this identification, $T_{V_\bullet}\Fl^\CC$ inherits a natural stucture of complex vectors space. 
\end{lem}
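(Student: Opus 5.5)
The plan is to compute the tangent space at a fixed point directly, by differentiating the orbit map, and then transport the answer to an arbitrary point by unitary equivariance. Let $X\in \Fl^\CC$ be the Hermitian matrix corresponding to $V_\bullet$, so that $X|_{V_j}=(r-j+1)\,{\rm id}$. Since $\Fl^\CC={\rm U}(n).X$ is the orbit of a compact Lie group acting smoothly on ${\rm Herm}_n$, it is an embedded submanifold. Its tangent space at $X$ is the image of the differential of $g\mapsto gXg^*$ at the identity:
$$T_X\Fl^\CC=\{[A,X] \mid A\in \mathfrak{u}(n)\},$$
where $\mathfrak u(n)$ is the space of skew-Hermitian matrices.

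Next I would decompose everything along $\CC^n=V_1\oplus\cdots\oplus V_r$, an orthogonal decomposition. Write $A=\sum_{i,j}A_{ij}$ with $A_{ij}:V_i\to V_j$ the blocks of $A$. Skew-Hermitianity means $A_{ji}=-A_{ij}^*$. On $V_i$ we have $X=(r-i+1)$, so
$$[A,X]_{ij}=A_{ij}X|_{V_i}-X|_{V_j}A_{ij}=(j-i)\,A_{ij}.$$
Diagonal blocks therefore vanish. For $i<j$, set $T_{ij}:=(j-i)A_{ij}$. Then the $(j,i)$ block of $[A,X]$ is $(i-j)A_{ji}=(j-i)A_{ij}^*=T_{ij}^*$. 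Hence $[A,X]=\sum_{i<j}(T_{ij}+T_{ij}^*)$. Conversely, for arbitrary $T_{ij}\in{\rm Hom}_\CC(V_i,V_j)$ with $i<j$, define $A_{ij}:=T_{ij}/(j-i)$ and $A_{ji}:=-A_{ij}^*$, and take zero diagonal blocks. This $A$ is skew-Hermitian and produces the given element. This proves the description of $T_{V_\bullet}\Fl^\CC$.

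For the identification \eqref{eq:tangent}, the map $(T_{ij})_{i<j}\mapsto\sum_{i<j}(T_{ij}+T_{ij}^*)$ is $\RR$-linear and surjective by the previous step. It is injective because $T_{ij}$ is recovered as the $(i,j)$ block, the orthogonal projection onto $V_j$ composed with restriction to $V_i$. The direct sum on the right of \eqref{eq:tangent} is a complex vector space, and transporting its structure gives the complex structure on $T_{V_\bullet}\Fl^\CC$. As a sanity check, the real dimension is $2\sum_{i<j}k_ik_j$, which equals $\dim {\rm U}(n)-\sum_i\dim{\rm U}(k_i)$.

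The only subtle point is the first step: justifying that the orbit is an embedded submanifold and that its tangent space is exactly the image of the infinitesimal action. I would cite the standard fact that orbits of compact group actions are closed embedded submanifolds, with the orbit map a submersion onto its image. The remaining steps are routine block computations. The conjugate-linearity of $T\mapsto T^*$ is harmless here, since the identification is only claimed over $\RR$.
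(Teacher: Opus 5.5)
Your proposal is correct and follows the paper's route: identify the tangent space at $X$ with $\{[A,X] : A\in\mathfrak{u}(n)\}$ and read off the off-diagonal blocks, which pick up the eigenvalue differences. The difference is only cosmetic. The paper computes at the diagonal base point, explicitly only for $r=2$, and then transports by unitary conjugation. You work directly at an arbitrary point with blocks along $V_1\oplus\cdots\oplus V_r$, which covers every $r$ at once and also makes the injectivity of the identification explicit.
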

To make the statement clear, some explanations are necessary. If $T_{ij} : V_i \to V_j$ is
$\CC$-linear, then $T_{ij}^*: V_j \to V_i$ is its adjoint, defined by
$$\langle T_{ij}^*(w), v\rangle = \langle w, T_{ij}(v)\rangle $$
for all $v\in V_i$ and $w\in V_j$. The map 
$\sum_{1\le i <j \le r} T_{ij}+T_{ij}^*$ is the $\CC$-linear endomorphism of $\CC^n=\bigoplus_{i=1}^r V_i$ described by
$$\left(\sum_{1\le i <j \le r} T_{ij}+T_{ij}^*\right)(v_1+ \cdots + v_r) =\sum_{1\le i <j \le r}
T_{ij}(v_i) + T_{ij}^*(v_j),$$
whenever $v_i \in V_i$, $1\le i \le r$. 

We will first prove the lemma in the special case when $r=2$, that is, for the ${\rm U}(n)$-conjugacy orbit in ${\rm Herm}_n$ of 
$$E= \begin{pmatrix}
2I_k& 0\\
0 &I_{n-k}
\end{pmatrix}, $$
which consists of all $AEA^{-1}$ with $A\in {\rm U}(n)$. 
The tangent space at $E$ consists of all matrices of the form $XE - EX$, where $X$ is in the Lie algebra of ${\rm U}(n)$, i.e.~it is $n\times n$ skew-Hermitian. 
Write $X$ in the block form, the pattern being the same as for $E$:
$$X=\begin{pmatrix}
S& -T^*\\
T &S'
\end{pmatrix},$$
where $S$ and $S'$ are skew-Hermitian. Compute:
$$XE-EX= 
\begin{pmatrix}
S& -T^*\\
T &S'
\end{pmatrix} \begin{pmatrix}
2I_k& 0\\
0 &I_{n-k}
\end{pmatrix} - \begin{pmatrix}
2I_k& 0\\
0&I_{n-k}
\end{pmatrix} \begin{pmatrix}
S& -T^*\\
T &S'
\end{pmatrix}=\begin{pmatrix}
0& T^*\\
T &0
\end{pmatrix}.
$$
This is exactly the form indicated in the lemma.

Let now $V_1$ be an arbitrary $k$-dimensional linear subspace  of $\CC^n$, which is of the form $A\CC^k$, for some $A\in {\rm U}(n)$. Also, $V_2:=A(V^\perp)$. As an element of the orbit ${\rm U}(n).E$, $(V_1, V_2)$ is equal to $A E A^{-1}$ and thus 
$$T_{(V_1, V_2)} ({\rm U}(n).E)=AT_{E}({\rm U}(n).E)A^{-1}.$$
The direct sum decomposition $\CC^n= \CC^k \oplus \CC^{n-k}$ induces 
$$\CC^n= A \CC^k \oplus A \CC^{n-k},$$
where $A\CC^k = V_1$ and $A\CC^{n-k}= V_2$. 
If $T: \CC^k \to \CC^{n-k}$ is linear then $ATA^{-1}$ maps $V_1$ to $A\CC^{n-k}$, which is $V_2$. 
Furthermore, the adjoint of $ATA^{-1}$ as a map from $V_2$ to $V_1$ is just $(A^{-1})^*T^*A^*$, that is,
$AT^*A^{-1}$. Consequently, the elements of $AT_{E}({\rm U}(n).E)A^{-1}$ are of the form
$$ATA^{-1} + AT^*A^{-1}=ATA^{-1}+(ATA^{-1})^*.$$

The case $r>2$ can be proved in a very similar way. One starts with
$$
E={\rm Diag}(
\underbrace{r,\ldots,r}_{k_1},
\ldots,
\underbrace{2,\ldots,2}_{k_{r-1}},
\underbrace{1,\ldots,1}_{k_r}
),
$$
writes an arbitrary skew-Hermitian matrix $X$ in corresponding block form, and computes $XE-EX$. The passage from $E$ to an arbitrary element of $\Fl^\CC$ is again straightforward.
\hfill $\square$


\subsection{The critical set of the squared-norm of the moment map according to Kirwan} \label{critkir}

Some general considerations concerning the topic mentioned in the title are needed at this point. Let $M$ be a compact connected symplectic manifold acted on by a torus $T$ and $\mu: M \to {\mathfrak t}^*$ a moment map. 
Fix an inner product on ${\mathfrak t}$, which is used to identify ${\mathfrak t}^*$ with ${\mathfrak t}$ and get 
$f: M \to \RR$, $f(x)=\|\mu(x)\|^2$, $x\in M$. To describe Crit($f$), consider  all $\beta \in {\mathfrak t}$ with the following property:
\begin{itemize}
	\item[(P)] If $\mu_\beta:=\langle \mu, \beta\rangle$, there exist a connected component  of ${\rm Crit}(\mu_\beta)$ such that $\beta$ lies in its $\mu$-image. 
	\end{itemize} 
	
	For any $\beta$ with the property above, we denote by $Z_\beta^i$ the  connected components of ${\rm Crit}(\mu_\beta)$ such that
	$\beta \in \mu(Z_\beta^i)$.
	 According to \cite[Lemma 3.12]{Ki}, the set ${\bf B}$ of all such vectors $\beta$  is finite and $${\rm Crit}(f)= \bigcup_{\beta \in {\bf B}, i} Z_\beta^i \cap \mu^{-1}(\beta).$$

\begin{rem} The definitions in Kirwan \cite{Ki} are slightly different, but the output is the same. Namely, she defines $Z_\beta$ as the union of all components of Crit($ \mu_\beta)$ on which $\mu_\beta$ takes the value $\|\beta\|^2$, see also \cite[Sect.~3.1]{BaHe}. After that she proves that Crit($f$) is the union of all $Z_\beta \cap \mu^{-1}(\beta)$ where $\beta$ ranges over a finite set slightly larger than the one considered here; see  \cite[Lemma 3.15]{Ki}). This is equivalent to the formulation above. Indeed, in the intersection $Z_\beta\cap\mu^{-1}(\beta)$, only those connected components of ${\rm Crit}(\mu_\beta)$ whose $\mu$-image contains $\beta$ can contribute. Conversely, if a connected component $Z_\beta^i$ of ${\rm Crit}(\mu_\beta)$ satisfies
$
\beta\in\mu(Z_\beta^i),
$
then $\mu_\beta$ is constant on $Z_\beta^i$, and its value is necessarily
$
\mu_\beta(\beta)=\langle\beta,\beta\rangle=\|\beta\|^2.
$
Thus $Z_\beta^i$ is one of the components included in Kirwan's $Z_\beta$.
\end{rem}


\subsection{The critical set of $\|\mu -x_0\|^2$} \label{sect:critmu}

The flag manifold $\Fl^\CC$ defined at the beginning of this section can be canonically equipped with a symplectic form. To this end one needs to identify 
first the space ${\rm Herm}_n$ with ${\frak u}(n)$, the Lie algebra of ${\rm U}(n)$, which consists of all $n\times n$ 
skew-Hermitian matrices, by means of the map $X\mapsto \sqrt{-1}X$, for all $X\in {\rm Herm}_n$. After that, ${\frak u}(n)$ needs to be identified with its real dual
${\frak u}(n)^*$, in terms of the map 
$$Y \mapsto (Z \mapsto {\rm Tr}(ZY)),$$
for all $Y, Z\in {\frak u}(n)$.  These make $\Fl^\CC$ into a coadjoint orbit of ${\rm U}(n)$, which can be equipped with the Kirillov-Kostant-Souriau symplectic form.
 Let now $T$ be the set of all diagonal matrices in ${\rm U}(n)$, which is an $n$-torus. 
Its induced action on $\Fl^\CC$ is Hamiltonian and a moment map $\mu: \Fl^\CC \to \RR^n$ is just 
$\mu_\CC$,  which was previously defined.  
(Here the Lie algebra of $T$ is ${\mathfrak t}\simeq {\mathbb R}^n$, which is identified with ${\mathfrak t}^*$ by means of the canonical inner product.) Let us now take $\beta =(\beta_1, \ldots, \beta_n) \in \RR^n$ an  arbitrary vector and determine the critical set of $\mu_\beta$ is.

Consider the partition
\begin{equation} \label{1dotsn}\{1, \ldots,n\} = A_1 \sqcup \cdots \sqcup A_m,\end{equation}
where $A_1, \ldots, A_m$ are the equivalence classes associated to $$i\sim j 
 \ \stackrel{{\rm def.}}{\Leftrightarrow} \  \beta_i=\beta_j, \ 1\le i,j\le n.$$  
 Denote $n_s:=|A_s|$ and $\CC^{A_s}:={\rm Span}_\CC \{e_i \mid i\in A_s\}$, for all $1\le s \le m$.
 
By \cite[Sec.~3.7]{Ki}, ${\rm Crit}(\mu_\beta)$ consists of all flags $V_\bullet=(V_i)_{1\le i \le r}\in \Fl^\CC$ which are fixed by all $\exp(t\beta)$, $t\in \RR$.
This can be expressed by saying that $\beta$, regarded as a diagonal linear transformation, leaves each $V_i$ invariant, or, equivalently,
$$
V_i=(V_i\cap \CC^{A_1}) \oplus \cdots \oplus (V_i \cap \CC^{A_m}),
$$
for all $1\le i \le r$. This observation allows us to describe the critical set as follows.
 



Take integers $k_{i,s} \ge 0$ indexed by
$1 \le i \le r$  and $1 \le s \le m$ 
such that
\begin{equation}
\sum_{s=1}^m k_{i,s} = k_i \,\text{ for all } i \ {\rm and} \ 
\sum_{i=1}^r k_{i,s} = n_s \,\text{ for all } s.
\label{eq:matching}
\end{equation}

For any such  matrix $\overline{k} = (k_{i,s})$ we look at orthogonal  decompositions of the form
\begin{equation}\label{cs}
\CC^{A_s} = V_{1,s} \oplus V_{2,s} \oplus \cdots \oplus V_{r,s}, \ {\rm where} \  V_{i,s}\subset \CC^{A_s} \ {\rm is \ a \ } \CC{\rm -linear  \
subspace}, \ \dim V_{i,s} = k_{i,s},
\end{equation} for  all   $1\le i \le r$ and $1\le s \le m.$

The connected components of  ${\rm Crit}(\mu_\beta)$ are precisely the submanifolds
\begin{equation}
\mathcal{C}(\overline{k}) \
:= \prod_{s=1}^m {\rm F}\ell^\CC_{k_{1,s}, k_{2,s}, \ldots, k_{r,s}}(\CC^{A_s}),
\label{eq:criticalcomponent}
\end{equation}
corresponding to the matrices $\overline{k}=(k_{i,s})$ which satisfy the conditions in \eqref{eq:matching}.
Here $ {\rm F}\ell^\CC_{k_{1,s}, k_{2,s}, \ldots, k_{r,s}}(\CC^{A_s})$ consists of all $V_{1,s}, V_{2,s}, \ldots, V_{r,s}$, which are $\CC$-linear subspaces of $\CC^{A_s}$, are pairwise orthogonal, and satisfy Eq.~(\ref{cs}). 
The embedding of the direct product above into $\Fl^\CC={\rm F}\ell^\CC_{k_{1}, k_{2}, \ldots, k_{r}}(\CC^{n})$ is obvious: to any
collection $V_{1,s}, V_{2,s}, \ldots, V_{r,s}$, with $1\le s \le m$ corresponds
$$\bigoplus_{s=1}^m V_{1,s}, \bigoplus_{s=1}^m V_{2,s}, \ldots , \bigoplus_{s=1}^m V_{r,s}.$$

Consider now the function $f_0 : {\rm F}\ell^\CC \to {\mathbb R}$, $f_0(V_\bullet)=\|\mu(V_\bullet)\|^2$, $V_\bullet \in {\rm F}\ell^\CC$. To describe its critical set, we will use the procedure described in Sect.~\ref{critkir}.  We are searching for $\beta \in \RR^n$ satisfying the requirement (P). Having in mind the presentation of Crit($\mu_\beta$), the 
critical components $Z_\beta^i$ are of the form $\mathcal{C}(\overline{k})$, see Eq.~(\ref{eq:criticalcomponent}), 
where the integers $k_{i,s}$ satisfy the conditions in (\ref{eq:matching}). 
Note that the  $\mu$-image of any such component is
$$\mu(\mathcal{C}(\overline{k})) = \prod_{s=1}^m \Delta^{A_s}_{k_{1,s}, k_{2,s}, \ldots, k_{r,s}},$$
where $\Delta^{A_s}_{k_{1,s}, k_{2,s}, \ldots, k_{r,s}} \subset \RR^{A_s}$ is the convex hull of the vector
$$(\underbrace{r, \dots, r}_{k_{1,s}},
\underbrace{r-1, \dots, r-1}_{k_{2,s}},
\dots,
\underbrace{1, \dots, 1}_{k_{r,s}})$$
and the vectors obtained from it by permuting its entries. 
The condition $\beta \in \mu({\mathcal C}(\overline{k}))$ determines $\beta$ uniquely, being equivalent to the fact that the common value of the entries of $\beta$ of index in $A_s$ is equal to
$$\frac{\sum_{i=1}^r(r-i+1) k_{i, s}}{n_s},$$
for all $1\le s\le m$. 
This is because the sum of the entries of all vectors in $\Delta^{A_s}_{k_{1,s}, k_{2,s}, \ldots, k_{r,s}} $ is the same. 
We have shown that  $\beta$ satisfies the requirement (P) if and only if 
$$\beta=\sum_{s=1}^m\left(\frac{\sum_{i=1}^r(r-i+1) k_{i, s}}{n_s}\sum_{j\in A_s} e_j \right).$$
For such a $\beta$, there are in general several components $Z_\beta^i$ which satisfy the condition (P), namely
$$Z_\beta^i= \prod_{s=1}^m {\rm F}\ell^\CC_{\ell_{1,s}, \ell_{2,s}, \ldots, \ell_{r,s}}(\CC^{A_s}),$$
where  $(\ell_{i,s})_{1\le i \le r, 1\le s\le m}$ satisfy the conditions in (\ref{eq:matching}) and 
$${\sum_{i=1}^r (r-i+1)\ell_{i, s}}={\sum_{i=1}^r(r-i+1)k_{i, s}}, \ {\rm for \ all} \ 1\le s \le m.$$

Let us now make the following notations. To any partition ${\mathcal A}=\{A_1, \ldots, A_m\}$ and any matrix
$\overline{k}=(k_{i,s})$  which satisfies  the conditions in (\ref{eq:matching}) one associates 
$$\beta_{{\mathcal A},\overline{k}} := \sum_{s=1}^m\left(\frac{\sum_{i=1}^r(r-i+1) k_{i, s}}{n_s}\sum_{j\in A_s} e_j \right),$$
 and
 \begin{equation}\label{zak}Z_{{\A,\overline{k}}}:=  \prod_{s=1}^m {\rm F}\ell^\CC_{k_{1,s}, k_{2,s}, \ldots, k_{r,s}}(\CC^{A_s}).\end{equation}
 Also, denote by $[\overline{k}]$ the equivalence class of the matrix $\overline{k}$ relative to the equivalence relation given by
\begin{equation}\label{sime}{\overline{\ell}} \sim \overline{\ell}' \ \stackrel{{\rm def.}}{\Leftrightarrow}  \ \beta_{\A,\overline{\ell}}=\beta_{\A, \overline{\ell}'}\end{equation}
 for any $\overline{\ell}=(\ell_{i,s})_{1\le i \le r, 1\le s\le m}$, $\overline{\ell}'=(\ell'_{i,s})_{1\le i \le r, 1\le s\le m}$ which satisfy the conditions in (\ref{eq:matching}). Finally
\begin{equation}\label{za}Z_{{\A,[\overline{k}]}}:=\bigcup_{\overline{\ell} \sim \overline{k}} Z_{{\A,\overline{\ell}}}.\end{equation}
The discussion above shows that the critical set of $f_0$ is the (disjoint) union 
$${\rm Crit}(f_0)= \bigsqcup (Z_{{\A,[\overline{k}]}} \cap \mu^{-1}(\beta_{\A,\overline{k}})),$$
the indexing set being the one of all pairs $(\A, [\vk])$, where ${\mathcal A}=\{A_1, \ldots, A_m\}$ is a partition of 
 $\{1, \ldots,n\}$ and $\overline{k}=(k_{i,s})_{1\le i \le r, 1\le s\le m}$ a matrix which satisfies  the conditions in (\ref{eq:matching}).
 
With the hypotheses of Thm.~\ref{maininstr} (b) in mind, we are actually interested in $\mu^{-1}(x_0)$, where $x_0$ is given by Eq.~(\ref{eq:xzero}). Set
 $$u:=\frac{\sum_{i=1}^r (r-i+1)k_i}{n},$$
 so that $x_0=(u, \ldots, u).$ Consider
  $\tilde{\mu} : {\rm F}\ell^\CC \to {\mathbb R}^n$ and  $f : {\rm F}\ell^\CC \to \RR$ 
  $$\tilde{\mu}(V_\bullet)=\mu(V_\bullet)-x_0,\quad f(V_\bullet) = \|\tilde{\mu}(V_\bullet)\|^2 ,$$ for all $V_\bullet \in {\rm F}\ell^\CC$.   Just as in the first part of this subsection, we are searching for all $\bet \in \RR^n$  which satisfy the requirement (P), this time relative to $\tilde{\mu}$. To this end observe that $\tilde{\mu}_{\bet}(V_\bullet)=\mu_{\bet}(V_\bullet)-\langle x_0, \bet\rangle$, thus ${\rm Crit}(\tilde{\mu}_{\bet})={\rm Crit}({\mu}_{\bet})$.
  Again, $\tilde{\beta}$ induces the partition given by Eq.~(\ref{1dotsn}) and the components of Crit($\tilde{\mu}_{\tilde{\beta}}$) are described by Eq.~(\ref{eq:criticalcomponent}). Say that 
$$Z_{\bet}^i=\prod_{s=1}^m {\rm F}\ell^\CC_{k_{1,s}, k_{2,s}, \ldots, k_{r,s}}(\CC^{A_s}).$$
Then 
$$\tilde{\mu}(Z_{\bet}^i)={\mu}(Z_{\bet}^i)-x_0$$
which consists of all
$$x=\left(x_1-u, \ldots,  x_n-u\right)$$
 with $(x_1, \ldots, x_n)\in \prod_{s=1}^m \Delta^{A_s}_{k_{1,s}, k_{2,s}, \ldots, k_{r,s}}$. 
 For $\tilde{\beta}$ to be in $\tilde{\mu}(Z_{\tilde{\beta}}^i)$ it is necessary and sufficient that $\tilde{\beta}+x_0$ has the common value of the entries of index in $A_s$  equal to  
 $$\frac{\sum_{i=1}^r(r-i+1) k_{i, s}}{n_s},$$
for all $1\le s\le m$.

We conclude:

\begin{lem}\label{critset}
 The critical set of   $f  : {\rm F}\ell^\CC \to \RR$, $f(V_\bullet) = \|\mu(V_\bullet)-x_0\|^2$, $V_\bullet \in \Fl^\CC$,   is labeled by:
 \begin{itemize}
 \item integers
 $1\le m \le n$,
 \item collections ${\mathcal A}=\{A_1, \ldots, A_m\}$ such that
 $$\{1, \ldots,n\} = A_1 \sqcup \cdots \sqcup A_m,$$
\item integers $k_{i,s}\ge 0$, with $1\le i \le r$ and $1\le s\le m$  which satisfy the conditions in (\ref{eq:matching}) 
 and such that the numbers 
 $$\frac{\sum_{i=1}^r(r-i+1) k_{i, s}}{n_s}, \ 1\le s \le m$$
  are pairwise distinct.
 \end{itemize} 
 For any such choice, denote
\begin{align}\label{betatil} {}& \tilde{\beta}_{{\mathcal A},\overline{k}} = \sum_{s=1}^m\left(\frac{\sum_{i=1}^r(r-i+1) k_{i, s}}{n_s}\sum_{j\in A_s} e_j \right)-x_0\\ \nonumber
{}& = \sum_{s=1}^m\left(\left(\frac{\sum_{i=1}^r(r-i+1)k_{i, s}}{n_s}-\frac{\sum_{i=1}^r {(r-i+1)k_i}}{n}\right)\sum_{j\in A_s} e_j \right).\end{align}
Then ${\rm Crit}(f)$ is equal to the following (disjoint) union
$${\rm Crit}(f)= \bigsqcup (Z_{{\A,[\overline{k}]}} \cap \tilde{\mu}^{-1}(\tilde{\beta}_{\A,\overline{k}})),$$
the indexing set being the one described above. Here $Z_{{\A,[\overline{k}]}}$ is defined by Equations (\ref{za}) and (\ref{zak}).
 \end{lem}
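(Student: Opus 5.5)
The plan is to run Kirwan's description of ${\rm Crit}(f)$ from Sect.~\ref{critkir} on the shifted moment map $\tilde\mu=\mu-x_0$. First I check that $\tilde\mu$ is again a moment map for the $T$-action on $\Fl^\CC$: it differs from $\mu$ by a constant vector, so the Hamiltonian condition is unchanged. This gives
$$
{\rm Crit}(f)=\bigcup_{\bet,i} Z^i_{\bet}\cap\tilde\mu^{-1}(\bet),
$$
where $\bet$ runs over the vectors satisfying (P) relative to $\tilde\mu$. Since ${\rm Crit}(\tilde\mu_{\bet})={\rm Crit}(\mu_{\bet})$, the components $Z^i_{\bet}$ are the manifolds $\mathcal C(\vk)$ of Eq.~(\ref{eq:criticalcomponent}). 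These are built from the partition $\A$ of $\{1,\ldots,n\}$ into the level sets of $\bet$ and from matrices $\vk$ satisfying (\ref{eq:matching}).

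Next I translate property (P). The image $\tilde\mu(\mathcal C(\vk))$ is the product of permutahedra shifted by $-x_0$. On each such permutahedron the coordinate sum over $A_s$ is constant, while $\bet$ is constant on $A_s$. Hence $\bet\in\tilde\mu(\mathcal C(\vk))$ holds exactly when $\bet$ equals $\bet_{\A,\vk}$ in Eq.~(\ref{betatil}). For the converse inclusion I note that this constant vector is the barycenter of $\prod_s\Delta^{A_s}$, so it does lie in the image.

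One compatibility point needs attention. The partition $\A$ was defined as the set of level classes of $\bet$, so $\bet$ must take distinct values on distinct blocks. This is precisely the requirement that the numbers $\sum_i(r-i+1)k_{i,s}/n_s$ be pairwise distinct. Conversely, any data $(\A,\vk)$ with these numbers distinct produces a $\bet_{\A,\vk}$ whose level partition is exactly $\A$, so the data satisfies (P). This establishes the labeling by $(m,\A,\vk)$.

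It remains to group the components and prove disjointness. For a fixed $\bet$, the components $Z^i_{\bet}$ satisfying (P) are the $Z_{\A,\overline\ell}$ with $\bet_{\A,\overline\ell}=\bet_{\A,\vk}$, that is with $\overline\ell\sim\vk$; their union is $Z_{\A,[\vk]}$ as in (\ref{za}). Each point $V_\bullet$ of ${\rm Crit}(f)$ determines $\bet=\tilde\mu(V_\bullet)$. From $\bet$ one recovers $\A$ as its level sets and the class $[\vk]$ as the set of $\vk$ with $\bet_{\A,\vk}=\bet$. Therefore distinct labels $(\A,[\vk])$ give distinct $\bet$ and disjoint pieces $Z_{\A,[\vk]}\cap\tilde\mu^{-1}(\bet_{\A,\vk})$.

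The main obstacle I expect is the distinctness condition. Without it, a pair $(\A,\vk)$ with coinciding block averages would yield a $\bet$ whose true level partition is coarser than $\A$. Such a pair would then duplicate a label belonging to the coarser partition, and the disjointness claim would fail. I would resolve this by stating explicitly that only the distinct case is admitted, which is where the lemma differs from the earlier unshifted discussion.
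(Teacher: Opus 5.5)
Your proposal is correct and follows essentially the same route as the paper. You apply Kirwan's description from Sect.~\ref{critkir} to the shifted moment map $\tilde\mu=\mu-x_0$, and you use ${\rm Crit}(\tilde\mu_{\bet})={\rm Crit}(\mu_{\bet})$ together with the constancy of the block coordinate sums on $\prod_s\Delta^{A_s}$ to force the common value $\sum_i(r-i+1)k_{i,s}/n_s$ on each $A_s$. You then group the admissible components by the class $[\vk]$.

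Your explicit points are slightly more detailed versions of what the paper leaves implicit: sufficiency via the barycenter, the pairwise distinctness as exactly the condition that $\A$ be the level partition of $\bet$, and disjointness via recovering $(\A,[\vk])$ from $\bet=\tilde\mu(V_\bullet)$. Like the paper, they tacitly use that $x_0$ has all coordinates equal.
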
 

We denote by ${\bf B}$ the set of  all pairs $({\A, [\vk]})$ described in the lemma. 

\begin{rem} It is crucial for the above simple description of the vectors $\tilde{\beta}$  that $x_0\in \RR^n$ have all its coordinates equal. For a general vector $x_0$, the condition
$\tilde\beta\in\tilde\mu(Z_{\tilde\beta}^i)$
leads to a more complicated system of conditions. We expect that the general situation can still be treated, but the corresponding analysis appears substantially more involved. In particular, such an extension would potentially allow the results of this section to be applied to frame spaces whose column vectors do not all have the same norm.
\end{rem}

\begin{rem} \label{rem:more}
In general, the equivalence class $[\vk]$ relative to the equivalence relation (\ref{sime}) contains more than just $\vk$.  
For example, take $n=6$, $r=3$, $k_1=k_2=k_3=2$, $m=2$, $A_1=\{1,2,3\}$, $A_2=\{4,5,6\}$, and

$$
\overline{k}=
\begin{pmatrix}
0&2\\
2&0\\
1&1
\end{pmatrix},
\qquad
\overline{\ell}=
\begin{pmatrix}
1&1\\
0&2\\
2&0
\end{pmatrix}.
$$

Both matrices satisfy the conditions in (\ref{eq:matching}) and 

$$\frac{3k_{1,1}+2k_{2,1}+k_{3,1}}{3}=\frac{3\ell_{1,1}+2\ell_{2,1}+\ell_{3,1}}{3}
=\frac{5}{3},
$$

while

$$\frac{3k_{1,2}+2k_{2,2}+k_{3,2}}{3}=\frac{3\ell_{1,2}+2\ell_{2,2}+\ell_{3,2}}{3}
=\frac{7}{3}.$$

Consequently,  $$\beta_{\A,\vk}=\beta_{\A, \overline{\ell}}
=\frac{5}{3}(e_1+e_2+e_3)
+
\frac{7}{3}(e_4+e_5+e_6).
$$


\end{rem}

\begin{rem}\label{betaz} The special case $m=1$ will be important later on. In this case,
$$A_1=\{1, \ldots, n\}, \ k_{i,1}=k_i, \ {\rm for \ all} \  1 \le i \le r, \  \ {\rm and} \ n_1=n.$$
For future reference, denote the resulting partition by 
$\A_0$ and the resulting matrix by $\vk_0$. 
Clearly $\tilde{\beta}_{\A_0, \vk_0}=(0, \ldots, 0)$, 
$Z_{\A_0, \vk_0}$ is the whole ${\rm F}\ell^\CC$, and $[\overline{k}_0]=\{\overline{k}_0\}$. 
\end{rem}

\subsection{The Kirwan stratification of ${\rm F}\ell^\CC$} \label{complex}

 According to Kirwan \cite[Thm.~4.16]{Ki}, there exists a stratification of $\Fl^\CC$, indexed by ${\mathbf B}$. In order to describe it, we first need to consider 
 a pair $(\A, \vk)$ of the type indicated in Lemma \ref{critset} and compute the Morse index of $\tilde{\mu}_{\tilde{\beta}_{\A, \vk}}$ along any of the critical components 
 $Z_{\A, \overline{\ell}}$, with $\overline{\ell}\sim\overline{k}$. To begin with, let us focus on the index of the aforementioned function along $Z_{\A, \overline{k}}$. Recall that the latter is describe by Eq.~(\ref{zak}).
 Denote by $Y_{\be}$ the stable manifold of $\tilde{\mu}_{\tilde{\beta}_{\A, \vk }}$ along $Z_{\be}$: we actually need its codimension in $\Fl^\CC$. 
 
 The computation relies on the description of the tangent space $T_{V_\bullet} Y_{\be}$, where $V_\bullet \in Z_{\be}$, based on general considerations that can be found  in \cite{Ki}. 
One needs to start with the torus $T_{\be}\subset T$ generated by $\bet_{\be}$. Concretely, this  is the following topological closure 
in $T$:
$$T_{\be}=\overline{\{\exp ( t\bet_{\be}) \mid t \in \RR\}}.$$
Obviously,
$$T_{\be} \subset \Delta_{A_1}(S^1) \times \Delta_{A_2}(S^1) \times \cdots \times \Delta_{A_m}(S^1),$$
where 
$$\Delta_{A_s}(S^1):=\{z \sum_{j\in A_s} e_j \mid z \in S^1\}.$$
One can see from this that $T_{\be}$ acts trivially on $Z_{\be}$. 
  We would like to describe the action of $T_{\be}$ on $T_{V_\bullet} {\rm F}\ell^\CC$.  
  We first look at $V_\bullet$, which is in $Z_{\be}$,  and thus has the form
  $$V_1=\bigoplus_{s=1}^m V_{1,s},  \ V_2=\bigoplus_{s=1}^m V_{2,s},  \ \ldots ,  \ V_r=\bigoplus_{s=1}^m V_{r,s},$$
  where
    $V_{1,s}, V_{2,s}, \ldots, V_{r,s}$ are $\CC$-linear subspaces of $\CC^{A_s}$, pairwise orthogonal, of dimensions $k_{1,s}, \ldots, k_{r,s},$ respectively.
    
   By Lemma \ref{tangsp}, 
   $$T_{V_\bullet}{\rm F}\ell^\CC = \bigoplus_{1\le i < j \le r} \bigoplus_{1\le s , t \le m}  {\rm Hom}_\CC (V_{i,s}, V_{j, t}).$$
The vector $\bet_{\be}$ induces an infinitesimal generator, which is defined as:
$$\bet_{\be} . \varphi = \frac{d}{d\tau}\Big|_{\tau=0}\left(\exp (\tau \bet_{\be}) .\varphi\right),$$
for all $\varphi \in T_{V_\bullet}\Fl^\CC$ ($\varphi$ is actually a sum of homomorphisms).

\begin{lem}
For any $1 \le i <j \le r$ and $1 \le s, t \le m$, $\bet_{\be}$ leaves ${\rm Hom}_\CC (V_{i,s}, V_{ j, t})$ invariant. Moreover, for any
$\varphi\in  {\rm Hom}_\CC (V_{i,s}, V_{j, t})$,
$$\bet_{\be}.\varphi = \sqrt{-1}\left(\frac{\sum_{i=1}^r(r-i+1)k_{i,t}}{n_t}-\frac{\sum_{i=1}^r(r-i+1)k_{i,s}}{n_s}\right)\varphi.$$
\end{lem}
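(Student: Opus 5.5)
We compute the derivative of the conjugation action directly from Lemma~\ref{tangsp}. Write $\bet:=\bet_{\be}$ and, for $1\le s\le m$,
$$b_s:=\frac{\sum_{i=1}^r(r-i+1)k_{i,s}}{n_s}-u,$$
so that $\bet=\sum_s b_s\sum_{j\in A_s}e_j$. As an element of ${\mathfrak t}\subset{\mathfrak u}(n)$, via ${\rm Herm}_n\cong{\mathfrak u}(n)$, $X\mapsto\sqrt{-1}X$, the vector $\bet$ acts on $\CC^n$ by $\sqrt{-1}\,b_s$ on each $\CC^{A_s}$. Thus $\exp(\tau\bet)$ is the unitary $D_\tau$ which is multiplication by $e^{\sqrt{-1}\tau b_s}$ on $\CC^{A_s}$. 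The sign convention is part of the identification of ${\mathfrak t}$ with $\RR^n$. I will fix it so that the action of $T$ on $\CC^n$ is by the diagonal unitaries $\exp(\sqrt{-1}\,{\rm Diag}(\theta))$; this is the convention under which $\mu_\CC$ is a moment map.

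\textbf{Step 1.} Since $V_\bullet\in Z_{\be}$, each $V_{i,s}\subset\CC^{A_s}$, so $D_\tau$ fixes $V_\bullet$. Hence $D_\tau$ acts on $T_{V_\bullet}\Fl^\CC$ by the differential of conjugation, $\varphi\mapsto D_\tau\varphi D_\tau^{-1}$. This is linear because $\Fl^\CC\subset{\rm Herm}_n$ and the action is the restriction of a linear action.

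\textbf{Step 2.} Take a tangent vector $\psi=\varphi+\varphi^*$ with $\varphi\in{\rm Hom}_\CC(V_{i,s},V_{j,t})$, as in Lemma~\ref{tangsp}. For $v\in V_{i,s}$ we get $D_\tau\varphi D_\tau^{-1}v=e^{\sqrt{-1}\tau(b_t-b_s)}\varphi(v)$. Likewise $D_\tau\varphi^*D_\tau^{-1}=(D_\tau\varphi D_\tau^{-1})^*$, because $D_\tau$ is unitary. So
$$D_\tau\psi D_\tau^{-1}=\varphi_\tau+\varphi_\tau^*,\qquad \varphi_\tau:=e^{\sqrt{-1}\tau(b_t-b_s)}\varphi\in{\rm Hom}_\CC(V_{i,s},V_{j,t}).$$
Under the identification (\ref{eq:tangent}), this says that each summand ${\rm Hom}_\CC(V_{i,s},V_{j,t})$ is preserved and $\varphi\mapsto e^{\sqrt{-1}\tau(b_t-b_s)}\varphi$.

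\textbf{Step 3.} Differentiating at $\tau=0$ gives $\bet.\varphi=\sqrt{-1}(b_t-b_s)\varphi$. The constant $u$ cancels in $b_t-b_s$, which yields exactly the stated formula.

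\textbf{Main obstacle.} The only delicate point is bookkeeping, not analysis. First, the sign of the exponent (whether it is $b_t-b_s$ or $b_s-b_t$) depends on the convention for $\exp$ on ${\mathfrak t}\cong\RR^n$, which must match the moment map convention. Second, one must check that the complex structure on $T_{V_\bullet}\Fl^\CC$ from Lemma~\ref{tangsp} is the one in which $\varphi_\tau$ really is scalar multiplication of $\varphi$; here the adjoint part is handled automatically because $\varphi_\tau^*$ is determined by $\varphi_\tau$. I will fix the convention explicitly as above.
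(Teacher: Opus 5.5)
Your proposal is correct and follows essentially the same route as the paper: both note that $\exp(\tau\tilde\beta)$ acts on each $\CC^{A_s}$ as the scalar $e^{\sqrt{-1}\tau(\kappa_s-u)}$, conjugate $\varphi\in{\rm Hom}_\CC(V_{i,s},V_{j,t})$ to get the factor $e^{\sqrt{-1}\tau(\kappa_t-\kappa_s)}$, and differentiate at $\tau=0$, with the same sign convention. Your extra check that conjugation by the unitary $D_\tau$ commutes with taking adjoints, so that the identification $\varphi\leftrightarrow\varphi+\varphi^*$ of Lemma~\ref{tangsp} and its complex structure are respected, is a small point of care that the paper leaves implicit.
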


\begin{proof}
For any $1\le s \le m$, $v\in \CC^{A_s}$, and $\tau \in \RR$,
$$\exp(\tau\bet_{\be}).v=e^{\sqrt{-1}\tau\left(\frac{\sum_{i=1}^r(r-i+1)k_{i,s}}{n_s}-\frac{\sum_{i=1}^r(r-i+1)k_i}{n}\right)}v.$$
Consequently, for any $\varphi\in {\rm Hom}_\CC (V_{i,s}, V_{j,t})$ and any $v\in V_{i,s}$,
$$(\exp(\tau \bet_{\be}).\varphi)(v)=e^{\sqrt{-1}\tau\left(\frac{\sum_{i=1}^r(r-i+1)k_{i,t}}{n_t}-\frac{\sum_{i=1}^r(r-i+1)k_i}{n}\right)}\varphi(e^{-\sqrt{-1}\tau\left(\frac{\sum_{i=1}^r(r-i+1)k_{i,s}}{n_s}-\frac{\sum_{i=1}^r(r-i+1)k_i}{n}\right)}v).$$
Then $\bet_{\be} .\varphi$ is the derivative relative to $\tau$ at $\tau=0$ of the expression above. 
This gives the desired result.
\end{proof}

Let us denote
$$\kappa_s
:=
\frac{\sum_{i=1}^r (r-i+1)k_{i,s}}{n_s},$$
for all $1\le s\le m$. 
The lemma above implies immediately that $\bet_{\be}$ is identically zero on the subspace 
$$T_{V_\bullet}Z_{\be}= \bigoplus_{s=1}^m \bigoplus_{1\le i <j \le r} {\rm Hom}_\CC( V_{i,s}, V_{j,s})$$  
of $T_{V_\bullet}{\rm F}\ell^\CC$. 
By the description of the tangent space to the stable manifold in \cite[p.~52]{Ki}, $T_{V_\bullet} Y_{\be}$ is the direct sum of all ${\rm Hom}_\CC(V_{i,s}, V_{j,t})$ with the property that 
$1\le i<j\le r$ and $1\le s, t \le m$ such that
$$\kappa_t\ge \kappa_s.$$
Thus one obtains a direct complement of $T_{V_\bullet}Y_{\be}$ in $T_{V_\bullet} {\rm F}\ell^\CC$ as the direct sum of all ${\rm Hom}_\CC (V_{i,s}, V_{j,t})$, where $1\le i < j \le r$ and $1\le s, t\le m$ such that
 $\kappa_s>\kappa_t.$ 
 Since the (real) dimension of ${\rm Hom}_\CC (V_{i,s}, V_{j,t})$ is equal to $2k_{i,s} \, k_{j,t}$, we deduce that:
 
 \begin{prop}\label{sum} If $m \neq 1$ then the codimension of $Y_{\be}$ in $\Fl^\CC$ is equal to
\begin{equation}\label{condi}2\sum k_{i,s} \, k_{j,t}\end{equation}
 where $1\le i <j \le r$ and $1\le s,t\le m$ such that $\kappa_s> \kappa_t.$
 \end{prop}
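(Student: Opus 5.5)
The codimension of $Y_{\be}$ equals the real dimension of a complement of $T_{V_\bullet}Y_{\be}$ inside $T_{V_\bullet}\Fl^\CC$. Since $Y_{\be}$ is a submanifold, it is enough to compute this at one point $V_\bullet\in Z_{\be}$, and I would do so using the weight decomposition already set up. First, I would refine Lemma~\ref{tangsp} at $V_\bullet$. Because $V_i=\bigoplus_s V_{i,s}$ is an orthogonal decomposition, ${\rm Hom}_\CC(V_i,V_j)$ splits as the direct sum of the ${\rm Hom}_\CC(V_{i,s},V_{j,t})$ over all $1\le s,t\le m$. This gives
$$T_{V_\bullet}\Fl^\CC=\bigoplus_{1\le i<j\le r}\ \bigoplus_{1\le s,t\le m}{\rm Hom}_\CC(V_{i,s},V_{j,t}).$$
By the preceding lemma, each summand is invariant under $\bet_{\be}$. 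Moreover, $\bet_{\be}$ acts on it as multiplication by $\sqrt{-1}(\kappa_t-\kappa_s)$, so each summand is a weight space of the $T_{\be}$-action with real weight $\kappa_t-\kappa_s$.

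Next, I would invoke Kirwan's description of the tangent space to the stable manifold \cite[p.~52]{Ki}. For the Morse--Bott function $\tilde\mu_{\bet_{\be}}$, the Hessian at a critical point is determined by the infinitesimal action of $\bet_{\be}$ and the compatible complex structure. Its negative and positive eigenspaces are therefore the sums of weight spaces whose weights have a fixed sign. The tangent space $T_{V_\bullet}Y_{\be}$ is the sum of the zero-weight part, which is $T_{V_\bullet}Z_{\be}$ together with the blocks with $\kappa_s=\kappa_t$, $s\neq t$, and the weight spaces of one sign, namely those with $\kappa_t\ge\kappa_s$, as already recorded above. The hypothesis $m\ne1$ guarantees that these blocks are genuinely indexed by distinct $s,t$, and by Lemma~\ref{critset} the $\kappa_s$ are pairwise distinct, so the zero weights come exactly from $s=t$. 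Hence the remaining summands, those with $\kappa_s>\kappa_t$, form a complement.

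Finally, I would count dimensions. The space ${\rm Hom}_\CC(V_{i,s},V_{j,t})$ has complex dimension $k_{i,s}k_{j,t}$, hence real dimension $2k_{i,s}k_{j,t}$. Summing over $i<j$ and over pairs with $\kappa_s>\kappa_t$ gives (\ref{condi}).

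The main obstacle is the middle step: confirming the sign convention, that is, whether the stable manifold corresponds to $\kappa_t\ge\kappa_s$ or to $\kappa_t\le\kappa_s$. This depends on the orientation of the identification $\mathrm{Herm}_n\simeq\mathfrak u(n)$ and on the complex structure in Lemma~\ref{tangsp}. I would check it by computing the Hessian of $\tilde\mu_{\bet}$ directly along a curve $\exp(\tau(T+T^*))$-type deformation in ${\rm Hom}(V_{i,s},V_{j,t})$. Concretely, the diagonal of $V_\bullet$ rotated slightly in the $(e_a,e_b)$-plane with $a\in A_s$, $b\in A_t$ changes by moving eigenvalue mass $(r-i+1)-(r-j+1)>0$ from index $a$ to index $b$. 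This gives a second-order change of $\langle\bet,\cdot\rangle$ proportional to $\kappa_t-\kappa_s$ times a positive quantity. This confirms that the blocks with $\kappa_s>\kappa_t$ are the descending directions, transverse to $Y_{\be}$.
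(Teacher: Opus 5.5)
Your proposal is correct and is essentially the paper's own argument. You split $T_{V_\bullet}\Fl^\CC$ into the blocks ${\rm Hom}_\CC(V_{i,s},V_{j,t})$, use that $\tilde\beta$ acts on each block by $\sqrt{-1}(\kappa_t-\kappa_s)$, take from Kirwan \cite[p.~52]{Ki} that $T_{V_\bullet}Y$ is the sum of the blocks with $\kappa_t\ge\kappa_s$, and add up the real dimensions $2k_{i,s}k_{j,t}$ of the remaining blocks.

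Your extra second-variation check correctly confirms the sign convention that the paper takes from Kirwan without comment, and it is consistent with Remark~\ref{rem:second}: rotating by $\theta$ in the plane of unit vectors $v\in V_{i,s}$, $w\in V_{j,t}$ changes $\tilde\mu_{\tilde\beta}$ by $(\lambda_i-\lambda_j)(\kappa_t-\kappa_s)\sin^2\theta$, with $\lambda_i=r-i+1>\lambda_j=r-j+1$. One small correction: the unitary curve realizing this rotation is $\exp(\tau(T-T^*))$, not $\exp(\tau(T+T^*))$.
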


 As already mentioned, the number indicated in the lemma is also equal to the index of $\tilde{\mu}_{\tilde{\beta}_{\A, \vk}}$ along  $Z_{\A, \overline{k}}$. Notice that this number depends exclusively on $\tilde{\beta}_{\A, \vk}$. In particular, if $\overline{\ell} \sim \vk$ relative to the equivalence relation given by Eq.~(\ref{sime}), then $ \tilde{\beta}_{\A, \vk}=\tilde{\beta}_{\A, \overline{\ell}}$ and thus the index of $\tilde{\mu}_{\tilde{\beta}_{\A, \vk}}$   
 along $Z_{\A, \overline{\ell}}$ is equal to the index along $Z_{\A, \overline{k}}$. That is, all relevant components of ${\rm Crit}(\tilde{\mu}_{\tilde{\beta}_{\A, \vk}})$ have the same Morse index. This observation enables us to apply   \cite[Thm.~4.16]{Ki}, see also \cite[Sect.~3.1]{BaHe}.
Concretely,  there exists a stratification
\begin{equation} \label{stratif} {\rm F}\ell^\CC = \bigsqcup_{(\A, [\vk]) \in {\bf B}}S_{\A, [\vk]},\end{equation} 
 with the following properties:
 \begin{itemize}
 \item For all $(\A, [\vk]) \in {\bf B}$, $S_{\A, [\vk]}$ is a submanifold of ${\rm F} \ell^\CC$ whose dimension is the same as the dimension of
 $Y_{\be}$.
 \item For all $(\A, [\vk]) \in {\bf B}$, $S_{\A, [\vk]}$ contains $Z_{{\A,[\overline{k}]}} \cap \tilde{\mu}^{-1}(\tilde{\beta}_{\A,\overline{k}})$ as a deformation retract (for this we refer to Lerman's work \cite{Le}).
 \item If {\bf B} is equipped with the strict partial order $\prec$ defined by
 $$ (\A, [\vk]) \prec (\A', [\overline{k'}]) \stackrel{{\rm def.}}{\Leftrightarrow} \|\tilde{\beta}_{\A', \overline{k'}}\| < \|\tilde{\beta}_{\A, \vk}\|$$ then 
 $$\overline{S_{\A,[\vk]}} \subset S_{\A,[\vk]}\cup \bigcup_{(\A', [\overline{k'}]) \prec (\A, [\vk]) }S_{\A', [\overline{k'}]},$$ where the left hand side is the topological closure of 
 $S_{\A,[\vk]}$ in
 $\Fl^\CC$. 
 \item There exists exactly one stratum of  codimension equal to 0, namely $S_{\A_0, [\vk_0]}$, see Rem.~\ref{betaz}.  It contains 
 $\tilde{\mu}^{-1}(0)=\mu^{-1}(x_0)$ as a deformation retract. 
 \end{itemize}

We are able to find a lower bound for the codimension of the strata above:

\begin{prop} \label{codimc}
Assume that $k_i \ge 2$ for all $1\le i \le r$. Then for any $(\A, [\vk] )\in {\bf B}$ which is different from $(\A_0, [\vk_0])$, the codimension of $S_{\A, [\vk]}$ in $\Fl^\CC$ is at least equal to 4. 
\end{prop}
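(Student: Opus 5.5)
By Proposition~\ref{sum}, and since the stratum $S_{\A,[\vk]}$ has the same dimension as $Y_{\A,\vk}$, it suffices to show the following: for any $(\A,[\vk])\ne(\A_0,[\vk_0])$ we have $m\ge 2$, and
$$\Sigma:=\sum_{\substack{i<j,\ \kappa_s>\kappa_t}} k_{i,s}\,k_{j,t}\ \ge 2 .$$
Note that $m\ge 2$ holds automatically: for $m=1$ the only matrix satisfying (\ref{eq:matching}) is $\vk_0$. The idea is to read each $\kappa_s$ as a weighted average. Write $w(i):=r-i+1$, which is strictly decreasing in $i$. Then $\kappa_s$ is the average of the weights $w(i)$, where $w(i)$ is taken with multiplicity $k_{i,s}$, and $n_s=\sum_i k_{i,s}$ is the total multiplicity.

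\emph{Step 1 (one term).} Since the $\kappa$'s are pairwise distinct and $m\ge 2$, choose $s$ with $\kappa_s$ maximal and $t$ with $\kappa_t$ minimal, so $\kappa_s>\kappa_t$. Let
$$i_0:=\min\{i : k_{i,s}>0\},\qquad j_0:=\max\{j : k_{j,t}>0\}.$$
If $i_0\ge j_0$, then every weight occurring in block $s$ is at most $w(i_0)\le w(j_0)$, which is at most every weight occurring in block $t$. This gives $\kappa_s\le\kappa_t$, a contradiction. Hence $i_0<j_0$, and the term $k_{i_0,s}k_{j_0,t}\ge 1$ appears in $\Sigma$. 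If either factor is at least $2$, we are done.

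\emph{Step 2 (using $k_i\ge 2$).} Assume now $k_{i_0,s}=k_{j_0,t}=1$. Since $k_{i_0}\ge 2$, some $s'\ne s$ has $k_{i_0,s'}\ge 1$.
If $s'\ne t$, then $\kappa_{s'}>\kappa_t$ by minimality of $\kappa_t$ and distinctness, so the term $k_{i_0,s'}k_{j_0,t}$ is an extra contribution to $\Sigma$.
Similarly, since $k_{j_0}\ge 2$, some $t'\ne t$ has $k_{j_0,t'}\ge 1$. If $t'\ne s$, the term $k_{i_0,s}k_{j_0,t'}$ is an extra contribution.
So the only remaining case is $k_{i_0,t}\ge1$ and $k_{j_0,s}\ge 1$.

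\emph{Step 3 (averaging argument; the expected main obstacle).} In this remaining case, suppose block $s$ has an index $i$ with $i_0<i<j_0$. Then $(i,s),(j_0,t)$ gives a new term. Likewise, if block $t$ has an index $j$ with $i_0<j<j_0$, then $(i_0,s),(j,t)$ gives a new term.

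It remains to rule out the case where neither happens. Then block $s$ consists of one copy of $i_0$, with all other indices $\ge j_0$; and block $t$ consists of one copy of $j_0$, with all other indices $\le i_0$. Also $n_s,n_t\ge 2$. This yields
$$\kappa_s\le \frac{w(i_0)+(n_s-1)w(j_0)}{n_s}\le \frac{w(i_0)+w(j_0)}{2}\le \frac{w(j_0)+(n_t-1)w(i_0)}{n_t}\le \kappa_t,$$
contradicting $\kappa_s>\kappa_t$. Hence $\Sigma\ge2$, and therefore the codimension is at least $4$.

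The delicate point is organizing this last case analysis cleanly. The inequality chain works because a single extreme entry cannot pull an average of at least two entries past the midpoint.
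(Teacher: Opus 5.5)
Your proof is correct, but it is organized differently from the paper's.

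The paper first expands $n_sn_t(\kappa_s-\kappa_t)>0$ into the inequality $\sum_{i<j}(j-i)\bigl(k_{i,s}k_{j,t}-k_{j,s}k_{i,t}\bigr)>0$, valid for \emph{every} pair with $\kappa_s>\kappa_t$. From this it follows that each such pair contributes at least one nonzero term to $\Sigma$. The case $m\ge 3$ is then immediate, since there are at least three such pairs and distinct pairs give distinct terms; the hypothesis $k_i\ge 2$ is not even needed there. For $m=2$, the paper uses the same weighted inequality (\ref{key}), together with $k_{i_0,2}\ge 1$ and $k_{j_0,1}\ge 1$, to rule out $\Sigma=1$ in a few lines. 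Those two facts are forced by $k_i\ge 2$ because there are only two blocks.

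You instead work only with the extremal pair, with $\kappa_s$ maximal and $\kappa_t$ minimal. You get the first term from the smallest occupied index of block $s$ and the largest occupied index of block $t$. You then use $k_i\ge 2$ to track where the remaining copies of $i_0$ and $j_0$ sit:
\begin{itemize}
\item if one of them lies in a third block, you get a second term, precisely because of the extremal choice of $s$ and $t$;
\item otherwise, either an intermediate index gives another term, or your midpoint estimate yields $\kappa_s\le\kappa_t$.
\end{itemize}
That midpoint estimate plays the role of the paper's inequality (\ref{key}).

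Your route avoids the split into $m=2$ and $m>2$ and is entirely elementary, at the cost of a longer case analysis. The paper's identity is shorter and gives more information: every pair with $\kappa_s>\kappa_t$ contributes, so $\Sigma\ge\binom m2$ in general.
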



\begin{proof} Since $S_{\A, [\vk]}$ and $Y_{\be}$ have the same dimension, we can use the formula for the codimension of the latter indicated in Prop.~\ref{sum}.
 Denote $$\Sigma:= \sum k_{i,s} k_{j,t}$$ the sum given by Eq.~(\ref{condi}). We need to show that  $\Sigma \ge 2$. This will be proved gradually, as follows.

{\it Step 1.} {\it For any pair $(s, t)$ with $\kappa_s > \kappa_t$ there exists a non-zero contribution $k_{i,s} \, k_{j,t}$ to $\Sigma$ (with $1\le i<j\le r$).}

From $\kappa_s > \kappa_t$, multiplying both sides by $n_s n_t > 0$ one obtains:
\[
  n_t \sum_{i=1}^r (r-i+1)\,k_{i,s} > n_s \sum_{i=1}^r (r-i+1)\,k_{i,t}.
\]
Substituting $n_s = \sum_{j=1}^r k_{j,s}$ and $n_t = \sum_{j=1}^r k_{j,t}$ and expanding,
\[
  \sum_{1\le i,j\le r} (r-i+1)\,k_{i,s}\,k_{j,t} >\sum_{1\le i,j\le r}(r-i+1)\,k_{i,t}\,k_{j,s}.
\]
Relabelling $i \leftrightarrow j$ in the right-hand side gives:
\[
  \sum_{1\le i,j\le r}(r-i+1)\,k_{i,s}\,k_{j,t} >\sum_{1\le i,j\le r}(r-j+1)\,k_{j,t}\,k_{i,s},
\]
and thus
\[
  \sum_{1\le i,j\le r}(j - i)\,k_{i,s}\,k_{j,t} >0.
\]
Separating the sum according to the sign of $j - i$:
\begin{equation}
  \sum_{i < j}(j-i)\bigl(k_{i,s}\,k_{j,t} - k_{j,s}\,k_{i,t}\bigr) > 0. \label{star}
\end{equation}
Consequently, there exists $i,j$ with $1\le i<j\le r$ such that
$$k_{i,s}\,k_{j,t} - k_{j,s}\,k_{i,t}>0 \Rightarrow k_{i,s}\,k_{j,t} > k_{j,s}\,k_{i,t} \Rightarrow k_{i,s}\,k_{j,t} >0.$$

{\it Step 2.} {\it The case $m>2$.}  By Lemma \ref{critset}, the numbers $\kappa_1, \ldots, \kappa_m$ are pairwise distinct. Since $m> 2$, there exist at least two distinct pairs $(s,t)$ such that $\kappa_s > \kappa_t$. By Step 1, each of these pairs induces a non-zero contribution to
$\Sigma$, and therefore $\Sigma \ge 2$.

{\it Step 3.} {\it The case $m=2$.} 
Without loss of generality we assume that $\kappa_1>\kappa_2$, so that
$$\Sigma=\sum_{1\le i<j\le r} k_{i,1}k_{j,2}.$$
Since $\kappa_1>\kappa_2$, using again Eq.~(\ref{star}),
\begin{equation}\label{key}
  \sum_{1 \le i < j \le r} (j - i)\,k_{i,1}\,k_{j,2}
  \;>\;
  \sum_{1 \le i < j \le r} (j - i)\,k_{j,1}\,k_{i,2}. 
  \end{equation}
Suppose $\Sigma = 1$. Then there exists a unique pair $(i_0, j_0)$ with $i_0 < j_0$ such that
$k_{i_0,1}\,k_{j_0,2} = 1$, and all other terms $k_{i,1}\,k_{j,2}$ (for $i < j$,
$(i,j)\ne(i_0,j_0)$) vanish. In particular $k_{i_0,1} = k_{j_0,2} = 1$, so the
left-hand side of Eq.~(\ref{key}) equals 
\[
  \sum_{1 \le i < j \le r} (j - i)\,k_{i,1}\,k_{j,2} = j_0 - i_0.
\]
On the other hand, since $k_{i_0} \ge 2$ and $k_{i_0,1} = 1$ we have $k_{i_0,2} \ge 1$;
similarly, since $k_{j_0} \ge 2$ and $k_{j_0,2} = 1$ we have $k_{j_0,1} \ge 1$.
Therefore the right-hand side of Eq.~(\ref{key}) satisfies
\[
  \sum_{1 \le i < j \le r} (j-i)\,k_{j,1}\,k_{i,2}
  \ge (j_0 - i_0)\,k_{j_0,1}\,k_{i_0,2}
  \ge (j_0 - i_0) \cdot 1 \cdot 1
  =j_0 - i_0.
\]
This contradicts the strict inequality \eqref{key}. 
\end{proof}
 
 \begin{rem}
 The assumption that $k_i\ge 2$ for all $1\le i \le r$ is essential in the proof above (see Step 3). \end{rem}
 
 \begin{rem} \label{rem:second} As already pointed out in Rem.~\ref{rem:first}, it is also essential for the proof above that the 
 ${\rm U}(n)$-orbit $\Fl^\CC$ we deal with consists of matrices with eigenspaces $V_1, V_2, \ldots, V_r$ whose eigenvalues are  
 $r, r-1, \ldots, 1$. If we change the order of the eigenvalues, the conclusion of Prop.~\ref{codimc} can fail to be true. 
More concretely, let us briefly consider the case when  ${\rm F}\ell^\CC$ is the orbit of  $${\rm Diag} (\underbrace{1, \dots, 1}_{k_1}, \underbrace{2, \dots, 2}_{k_2}, \dots,\underbrace{r, \dots, r}_{k_r}).$$   The barycenter would be then $$\frac{\sum_{i=1}^r i\, k_i}{n}\,(1,\dots,1).$$ By setting $$\kappa'_s := \frac{\sum_{i=1}^r i\,k_{i,s}}{n_s},$$ the challenge we would face is whether the sum $$\sum k_{i,s} k_{j,t}$$ where $1\le i <j \le r$ and $1\le s,t\le m$ with $\kappa'_s>\kappa'_t$ is different from 1.  The answer is in general negative, i.e., the sum can be equal to 1.
 Here is an example. 
 
 We start with the  ${\rm U}(5)$-orbit of
 $${\rm Diag} (1,1,2,2,2),$$ which we denote\footnote{Our orbit only gives  a certain presentation of the Grassmannian.}  ${\rm Gr}_2(\CC^5)$.  
 We have $n=5$, $k_1=2$, and $k_2=3$.  The barycenter of the image of the matrix-diagonal map is
 $$x_0=\frac{8}{5}(1,1,1,1,1).$$
 The critical set of $f$ can be described in the same way as in Lemma \ref{critset}. 
 Consider the connected component of ${\rm Crit}(f)$ attached to:
 \begin{itemize}
 \item $A_1=\{1,3\}$ and $A_2=\{2,4,5\}$ (thus $m=2$, $n_1=2$, and $n_2=3$),
 \item $k_{11}=1$, $k_{12}=1$, $k_{21}=1$, and $k_{22}=2$.
 \end{itemize}
 Thus 
 \begin{align*}
 {}& \kappa'_1=\frac{1\cdot 1 + 2\cdot 1}{2}=\frac{3}{2},\\
 {}& \kappa'_2=\frac{1\cdot 1 + 2 \cdot 2}{3}=\frac{5}{3}.
 \end{align*}
 The objects attached in Lemma \ref{critset} are:
 \begin{align*}
 {}& Z = {\rm Gr}_1(\CC^{A_1}) \times {\rm Gr}_1(\CC^{A_2}),\\
 {}& \tilde{\beta}= \frac{3}{2}(e_1+e_3) + \frac{5}{3}(e_2+e_4+e_5)- x_0.
 \end{align*}
 Here ${\rm Gr}_1(\CC^{A_1})$ and ${\rm Gr}_1(\CC^{A_2})$ are the orbits of ${\rm Diag}(1,0, 2,0,0)$ and ${\rm Diag}(0, 1,0, 2,2)$ under
 the unitary groups ${\rm U} (\CC^{A_1})$ and ${\rm U} (\CC^{A_2})$ respectively. 
Take $(V_1, V_2)\in Z$ arbitrarily, where $\dim V_1=2$ and $V_2=V_1^\perp$ (orthogonal complement in $\CC^5$). Furthermore,
$$V_1=V_{11}\oplus V_{12} \ {\rm and} \ V_2=V_{21}\oplus V_{22},$$
where $V_{11}\subset \CC^{A_1},$ $V_{12}\subset \CC^{A_2}$ are 1-dimensional, $V_{21}$ is the orthogonal complement
of $V_{11}$ in $\CC^{A_1}$, and $V_{22}$  the orthogonal complement
of $V_{12}$ in $\CC^{A_2}$. 
Note that
\begin{align} \label{tvb} {}& T_{(V_1, V_2)}{\rm Gr}_2(\CC^5)= {\rm Hom}_\CC(V_1, V_2) \\  \nonumber
{}&= {\rm Hom}_\CC(V_{11}, V_{21}) \oplus {\rm Hom}_\CC(V_{11}, V_{22}) 
\oplus {\rm Hom}_\CC(V_{12}, V_{21}) 
\oplus {\rm Hom}_\CC(V_{12}, V_{22}).\end{align}

 For any $v\in \CC^{A_1}$ and $\tau \in \RR$,
   $$\exp(\tau \tilde{\beta}) v= e^{\sqrt{-1}\tau(\frac{3}{2}-\frac{8}{5})}v.$$
   Similarly, for any $v\in \CC^{A_2}$ and $\tau \in \RR$,
  $$\exp(\tau \tilde{\beta}) v= e^{\sqrt{-1}\tau(\frac{5}{3}-\frac{8}{5})}v.$$ 
  Thus for $f\in {\rm Hom}_\CC(V_{11}, V_{22})$,
  $$\tilde{\beta}.f= {\sqrt{-1}\left(\frac{5}{3}-\frac{3}{2}\right)}f$$
  and for $g\in {\rm Hom}_\CC(V_{12}, V_{21})$,
  $$\tilde{\beta}.g= {\sqrt{-1}\left(\frac{3}{2}-\frac{5}{3}\right)}g.$$
  Also, $\tilde{\beta}$ is identically zero on the remaining two terms in the decomposition (\ref{tvb}). 
  Let $Y$ be the stable manifold of $\tilde{\mu}_{\tilde{\beta}}$ along $Z$. Since $\frac{3}{2}<\frac{5}{3}$, a direct complement of $T_{(V_1, V_2)} Y$ in $T_{(V_1, V_2)}({\rm Gr}_2(\CC^5))$ is just 
  ${\rm Hom}_\CC(V_{12}, V_{21})$, whose dimension is equal to 2. Thus the conclusion of Prop.~\ref{codimc} is no longer valid.
  
  To achieve the result one should change the presentation  of the orbit and think of it as ${\rm U}(5).{\rm Diag}(2,2,2,1,1).$
 The element of the orbit we were looking at before is then $(V_2, V_1)$, and the  tangent space required by Prop.~\ref{codimc} is 
 ${\rm Hom}_\CC(V_2, V_1)$, along with its canonical structure of complex vector space.   But this clearly differs from ${\rm Hom}_\CC(V_1, V_2)$, which was used in the computations above. As real vector spaces, these two spaces are naturally identified, but the complex structures induced by the two presentations are conjugate to each other, which makes that the corresponding weights of the infinitesimal generator above get opposite signs.
   \end{rem}
 
\begin{cor}\label{levelconn} If $k_i \ge 2$ for all $1\le i \le r$, then the inclusion $\mu^{-1}(x_0)\hookrightarrow {\rm F}\ell^\CC$ is 3-connected.
In particular, $\mu^{-1}(x_0)$ is a simply connected  subspace of $\Fl^\CC$. 
\end{cor}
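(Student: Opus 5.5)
The plan is to use the Kirwan stratification (\ref{stratif}) together with Prop.~\ref{codimc}. The stratum $S_{\A_0,[\vk_0]}$ is open: it is the unique stratum of codimension $0$, and by the closure property every other stratum has $\|\tilde\beta\|>0$, hence lies in the complement of the union of strata above it. Its complement
\[
  \Fl^\CC\setminus S_{\A_0,[\vk_0]}=\bigsqcup_{(\A,[\vk])\neq(\A_0,[\vk_0])}S_{\A,[\vk]}
\]
is a finite union of submanifolds, each of real codimension at least $4$ by Prop.~\ref{codimc}. A standard transversality argument then applies. A map of a sphere or disk of dimension $j$ into the compact manifold $\Fl^\CC$ can be deformed to miss a closed union of submanifolds of codimension $c$ whenever $j<c$. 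Homotopies of such maps, which have dimension $j+1$, can be deformed likewise when $j+1<c$.

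To keep the transversality argument clean, I would add the strata one at a time in order of decreasing $\|\tilde\beta\|$. The partial order guarantees that each stratum is closed in the open set obtained by removing all strata of larger $\|\tilde\beta\|$. Removing a closed submanifold of codimension $c\ge 4$ from a manifold gives an inclusion that is $(c-1)$-connected, hence at least $3$-connected. Composing these finitely many $3$-connected inclusions, I conclude that $S_{\A_0,[\vk_0]}\hookrightarrow \Fl^\CC$ is $3$-connected. Concretely, this means bijections on $\pi_0,\pi_1,\pi_2$ and a surjection on $\pi_3$.

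Next I would use that $S_{\A_0,[\vk_0]}$ deformation retracts onto $\mu^{-1}(x_0)=\tilde\mu^{-1}(0)$ (Lerman's result, quoted in the list of properties of the stratification). So $\mu^{-1}(x_0)\hookrightarrow S_{\A_0,[\vk_0]}$ is a homotopy equivalence. Composing with the previous inclusion gives that $\mu^{-1}(x_0)\hookrightarrow\Fl^\CC$ is $3$-connected. For the last claim, $\Fl^\CC$ is a quotient ${\rm U}(n)/({\rm U}(k_1)\times\cdots\times {\rm U}(k_r))$ of the connected group ${\rm U}(n)$ by a connected subgroup. The long exact sequence of this fibration shows $\pi_0=\pi_1=0$ for $\Fl^\CC$. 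Hence $\mu^{-1}(x_0)$ is path-connected and simply connected. It is nonempty because $x_0$ lies in the permutahedron.

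The main obstacle is making the codimension-to-connectivity step rigorous for a stratified complement rather than a single closed submanifold. The strata need not be closed individually, and the inclusion bound concerns the non-minimal strata. The ordered, one-stratum-at-a-time removal above is designed to handle this: at each stage the removed piece is a closed submanifold of the current open set. Alternatively, I would invoke Kirwan's Morse-theoretic statement that the stratification is equivariantly perfect and that attaching strata of codimension $\ge 4$ does not change $\pi_{\le 2}$. Either route relies on the stratification properties already listed. The indices of the strata are even, so the Morse-theoretic viewpoint (cells of dimension $\ge 4$ are attached) gives the same bound.
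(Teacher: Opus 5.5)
Your proposal is correct and follows essentially the same route as the paper. You peel off the non-open strata in order of decreasing $\|\tilde\beta\|$; the paper removes all current minimal elements at once, while you remove them one at a time, which is equivalent. You then use that complements of closed codimension-$c$ submanifolds give $(c-1)$-connected inclusions (the paper cites \cite[Thm.~1.11]{BW}), with $c\ge 4$ from Prop.~\ref{codimc}, compose the inclusions, and finish with Lerman's deformation retraction onto $\mu^{-1}(x_0)$.

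One small repair is needed in the last step. Connectedness of ${\rm U}(k_1)\times\cdots\times{\rm U}(k_r)$ alone does not give $\pi_1(\Fl^\CC)=0$ (compare ${\rm U}(n)/{\rm SU}(n)\cong S^1$). You also need $\pi_1({\rm U}(k_1)\times\cdots\times{\rm U}(k_r))\to\pi_1({\rm U}(n))\cong\ZZ$ to be onto. This holds because the determinant restricted to ${\rm U}(k_1)$ already induces a surjection on $\pi_1$.
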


\begin{proof}
The proof goes exactly like for \cite[Prop.~3.4]{Ma2}. For the sake of completeness we reproduce the details, as follows. 
Recall that in the stratification (\ref{stratif}), the  stratum \(S_{\A_0,[\vk_0]}\) contains
$
\tilde\mu^{-1}(0)=\mu^{-1}(x_0)
$
as a deformation retract.
We will prove that the inclusion
$
S_{\A_0,[\vk_0]}\hookrightarrow \Fl^{\mathbb C}
$
is 3-connected.

Let \((A_1,[\vk_1]),\ldots,(A_{q_1},[\vk_{q_1}])\) be the minimal elements of {\bf B}. By the closure property of the stratification, the corresponding strata $
S_{\A_1,[\vk_1]},\ldots,S_{\A_{q_1},[\vk_{q_1}]}
$ 
are closed submanifolds of \(\Fl^{\mathbb C}\). Since they are non-open strata, Prop.~\ref{codimc} shows that each of them has codimension at least 4. Being also pairwise disjoint,  by \cite[Thm.~1.11]{BW}, the inclusion
$$
\Fl^{\mathbb C}\setminus
\bigcup_{j=1}^{q_1}S_{\A_j,[\vk_j]}
\hookrightarrow
\Fl^{\mathbb C}
$$
is \(3\)-connected.

Consider now 
$$
{\bf B}_1:={\bf B}\setminus
\{(\A_1,[\vk_1]),\ldots,(\A_{q_1},[\vk_{q_1}])\},
$$
and let
$
(\A_1',[\vk_1']),\ldots,(\A_{q_2}',[\vk_{q_2}'])
$
be the minimal elements of ${\bf B}_1$. By the closure property of the stratification, the corresponding strata are closed in the manifold
$$
\Fl^{\mathbb C}\setminus
\bigcup_{j=1}^{q_1}S_{\A_j,[\vk_j]}.
$$
Again, Prop.~\ref{codimc} shows that each $S_{\A_j',[\vk_j']}$ above has codimension at least \(4\), and hence \cite[Thm.~1.11]{BW} implies that the inclusion obtained by removing these strata is \(3\)-connected.

Continuing in this way, and using the finiteness of {\bf B}, we obtain a sequence of subspaces
$$
\Fl^{\mathbb C}=M_0\supset M_1\supset\cdots\supset M_N
$$
such that every inclusion
$
M_{j+1}\hookrightarrow M_j
$
is \(3\)-connected, and
$
M_N=S_{\A_0,[k_0]}.
$
Indeed, the process terminates with  $\{(\A_0,[\vk_0])\}$, since every proper subset of {\bf B} containing $(\A_0,[\vk_0])$  has it as the greatest element and hence the latter cannot be minimal unless it is the only element of the subset.

It follows that the composite inclusion
$
S_{A_0,[\vk_0]}
\hookrightarrow
\Fl^{\mathbb C}
$
is \(3\)-connected.
Finally, \(S_{\A_0,[\vk_0]}\) contains
$
\tilde\mu^{-1}(0)=\mu^{-1}(x_0)
$
as a deformation retract. Therefore the inclusion
 $
\mu^{-1}(x_0)\hookrightarrow S_{\A_0,[\vk_0]}
$
is a homotopy equivalence, and hence
$
\mu^{-1}(x_0)\hookrightarrow \Fl^{\mathbb C}
$
is \(3\)-connected.

Since \(\Fl^{\mathbb C}\) is simply connected, the latter also implies that \(\mu^{-1}(x_0)\) is simply connected.
\end{proof}

\noindent{\it Proof of Thm.~\ref{thmconn2}}. To make the proof suitable to the considerations in this section,  
it is convenient to make a slight change and consider 
$$S={\rm Diag} (\underbrace{1 , \dots, 1}_{k_{r-1}},
\dots,
\underbrace{{r-1}, \dots, {r-1}}_{k_1}).$$
We will use Thm.~\ref{maininstr} (b) for this choice of $S$ and frames whose columns have pairwise equal norms. The relevant complex flag manifold is the ${\rm U}(n)$-orbit of 
$${\rm Diag} (\underbrace{1 , \dots, 1}_{k_{r-1}},
\dots,
\underbrace{{r-1}, \dots, {r-1}}_{k_1}, \underbrace{{0}, \dots, {0}}_{n-k}).$$
But $n-k=k_r$ and consequently the above orbit is the same as  
$${\rm U}(n).{\rm Diag} ( \underbrace{{0}, \dots, {0}}_{k_r}, \underbrace{1 , \dots, 1}_{k_{r-1}},
\dots,
\underbrace{{r-1}, \dots, {r-1}}_{k_1}),$$
which can be expressed as\footnote{Simply observe that the ${\rm U}(n)$-orbit does not change by permuting the entries of the diagonal matrix.}   
$$-{\rm I}_n+{\rm U}(n).{\rm Diag} ( \underbrace{1, \dots, 1}_{k_r}, \underbrace{2 , \dots, 2}_{k_{r-1}},
\dots, \underbrace{{r}, \dots, {r}}_{k_1})=-{\rm I}_n+\Fl^\CC.$$
The frames in ${\mathcal F}^\CC(S, n)$ are of type $F=[f_1 | \ldots | f_n]$ with
$$\|f_1\|^2 = \cdots = \|f_n\|^2,$$
the common value of these numbers being $$c:=\frac{{\rm Tr}(S)}{n}=
\frac{1\cdot k_{r-1} + \cdots (r-1)\cdot k_1}{n}.$$ 
Let us now compute $\mu_\CC^{-1}(c, \dots , c)$. 
Note that for any $X\in {\rm F}\ell^\CC$, 
$$\mu_\CC (-{\rm I}_n +X) =(-1, \ldots , -1) +\mu(X),$$
where $\mu:{\rm F}\ell^\CC \to \RR^n$ is the matrix-diagonal projection which was used in this section. (As already pointed out,
$\mu$ is the same as $\mu_\CC$.)  
Thus $\mu_\CC(-{\rm I}_n +X)=(c, \ldots, c)$ iff
$$\mu(X)=(c+1, \ldots, c+1).$$
But
\begin{align*}{}&c+1=\frac{(1\cdot k_{r-1} + \cdots (r-1)\cdot k_1) +n}{n}\\
{}& =\frac{(1\cdot k_{r-1} + \cdots (r-1)\cdot k_1) +(k_1+ \cdots +k_r)}{n}\\
{}& =\frac{k_r+ 2\cdot k_{r-1} + \dots + r\cdot k_1}{n}\end{align*}
and hence $(c+1, \ldots, c+1) = x_0$, the barycenter we used above. 
This implies that $$\mu_\CC^{-1}(c, \dots , c)=-{\rm I}_n +\mu^{-1}(x_0),$$
and consequently, by Cor.~\ref{levelconn}, the embedding of  $\mu_\CC^{-1}(c, \dots , c)$ in the orbit $-{\rm I}_n +{\rm F}\ell^\CC$ is a 3-connected map.
It only remains to use Thm.~\ref{maininstr} (b) to finish the proof. $\square$

\section{Real flag manifolds} \label{realflag}

In this section we will prove Theorems \ref{fdn} and \ref{thmconn1}. 

\subsection{Connectedness of ${\mathcal F}^\RR(S, n)$}\label{sectreal1} The  main instrument is a ``real" version of  Cor.~\ref{levelconn}. 
Again, $n\ge 2$, $r\ge 2$, and $k_1, \ldots, k_r\ge 1$ are integers such that
$k_1+ \cdots + k_r=n$. 
This time we consider the associated {\it real} flag manifold ${\rm F}\ell^\RR,$ which is the space of all symmetric $n\times n$ matrices with real entries which are ${\rm O}(n)$-conjugate with
$${\rm Diag} (\underbrace{r, \dots, r}_{k_1},
\underbrace{r-1, \dots, r-1}_{k_2},
\dots,
\underbrace{1, \dots, 1}_{k_r}).$$
We approach it from the viewpoint of a real symplectic manifold. More precisely, we denote by $\sigma$ the automorphism of ${\rm Herm}_n$ given by 
$\sigma(X)=\overline{X}$, for all $X\in {\rm Herm}_n$, where $\overline{X}$ is the entry-wise complex conjugate of $X$. It clearly induces an automorphism of $\Fl^\CC$ 
whose 
fixed point set is $\Fl^\RR$. As before, $T$ is the space of all diagonal matrices in ${\rm U}(n)$. Define $\phi:T \to T$,  $\phi(g)= g^{-1}$, for all $g\in T$. 
We are going to use the notion of {\it real Hamiltonian system}, for which we refer to \cite[Definition 3]{BaHe}. This notion had been previously defined and investigated 
in \cite{SO}, see also the survey paper \cite{S}. 

\begin{lem} \label{realstr} The manifold $\Fl^\CC$ equipped with the Kirillov-Kostant-Souriau symplectic form, the $T$-action, the moment map $\mu_\CC - x_0$ and the automorphisms 
$\sigma$ and $\phi$ above is a real Hamiltonian system.
\end{lem}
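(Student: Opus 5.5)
The plan is to verify directly the axioms of a real Hamiltonian system as listed in \cite[Definition 3]{BaHe}. These are: (i) $\sigma$ is an anti-symplectic involution of $\Fl^\CC$ with non-empty fixed point set; (ii) $\phi$ is an involutive automorphism of $T$; (iii) $\sigma$ and the action are compatible in the sense $\sigma(g.x)=\phi(g).\sigma(x)$; (iv) the moment map satisfies $(\mu_\CC-x_0)\circ\sigma = -\phi_*\circ(\mu_\CC - x_0)$, where $\phi_*$ is the induced map on $\mathfrak t^*$. Depending on the sign convention in \cite{BaHe}, (iv) may instead read $(\mu_\CC - x_0)\circ \sigma = (\mu_\CC-x_0)$ with $d\phi=-{\rm id}$. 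I will check the precise convention there and verify the corresponding identity.

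The group-theoretic items are immediate. $\sigma(X)=\overline X$ preserves ${\rm Herm}_n$ and the spectrum, so it maps $\Fl^\CC$ to itself. It is an involution, and its fixed points are the real symmetric matrices in the orbit. These form exactly $\Fl^\RR$, by orthogonal diagonalization of real symmetric matrices, so the fixed set is non-empty. $\phi(g)=g^{-1}$ is an automorphism of the abelian group $T$ and an involution, with $d\phi=-{\rm id}$ on $\mathfrak t\cong\RR^n$. For compatibility, take $g\in T$ diagonal unitary. Then $\sigma(gXg^{-1})=\bar g\,\overline X\,\bar g^{-1}=g^{-1}\overline X g=\phi(g).\sigma(X)$, since $\bar g=g^{-1}$. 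For the moment map, the diagonal of $\overline X$ equals the diagonal of $X$, because diagonal entries of Hermitian matrices are real. Hence $(\mu_\CC-x_0)\circ\sigma=\mu_\CC-x_0$. Since $\phi_*$ on $\mathfrak t^*$ is $-{\rm id}$, this is exactly the required relation $\mu\circ\sigma=-\phi_*\circ\mu$. Subtracting the constant $x_0$ causes no problem, because $x_0$ is a real vector fixed by the identity just derived.

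The main step is anti-symplecticity, $\sigma^*\omega=-\omega$, for the KKS form. I would compute at $X\in\Fl^\CC$ with tangent vectors $[A,X]$ and $[B,X]$, where $A,B\in\mathfrak u(n)$. Under the identification ${\rm Herm}_n\ni X\mapsto\sqrt{-1}X\in\mathfrak u(n)$ and the trace pairing, the form is $\omega_X([A,X],[B,X])=c\,{\rm Tr}(\sqrt{-1}X[A,B])$ for a fixed nonzero real constant $c$. Since $\sigma$ is real-linear, $d\sigma([A,X])=\overline{[A,X]}=[\bar A,\bar X]$, and $\bar A,\bar B\in\mathfrak u(n)$. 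Therefore $\omega_{\bar X}([\bar A,\bar X],[\bar B,\bar X])=c\,{\rm Tr}(\sqrt{-1}\,\overline X\,\overline{[A,B]})=c\,\overline{{\rm Tr}(-\sqrt{-1}X[A,B])}$. The quantity ${\rm Tr}(\sqrt{-1}X[A,B])$ is real, being a KKS value, so this equals $-\omega_X([A,X],[B,X])$. This proves anti-symplecticity and completes the list of axioms. The only care needed is the sign bookkeeping in the KKS formula and matching the sign convention for $\phi$ in \cite{BaHe}.
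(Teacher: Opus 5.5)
Your proof is correct, but it takes a more self-contained route than the paper. The paper does not check anti-symplecticity or the equivariance $\sigma(g.X)=\phi(g).\sigma(X)$ by hand. Instead it observes that, after multiplying by $\sqrt{-1}$, $\Fl^\CC$ becomes the adjoint orbit of an element of $\mathfrak{u}(n)$ that complex conjugation sends to its negative. The whole structure — the anti-symplectic involution, the compatibility with the group action, and $\mu_\CC\circ\sigma=-\phi_*\circ\mu_\CC$ — is then supplied by \cite[Example 1.9]{SO}. The only thing left is to check that shifting by $x_0$ preserves the moment-map identity, which holds because $\phi_*=-\mathrm{id}$.

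You instead verify every axiom directly. The KKS computation, using $d\sigma([A,X])=[\bar A,\bar X]$ and the reality of $\mathrm{Tr}(\sqrt{-1}X[A,B])$ (a trace of a product of two skew-Hermitian matrices), gives $\sigma^*\omega=-\omega$. The relation $\bar g=g^{-1}$ for diagonal unitary $g$ gives the equivariance.

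The paper's route is shorter and places the lemma in the general framework of coadjoint orbits through an anti-invariant point. Yours avoids the external reference and the implicit restriction from the ${\rm U}(n)$-system to $T$, at the cost of the sign bookkeeping you carried out. Your hedge about conventions is unnecessary: the two forms of (iv) you list coincide here because $\phi_*=-\mathrm{id}$, and this is exactly the form the paper uses.
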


\begin{proof} One has to recall that the symplectic structure on $\Fl^\CC$ becomes apparent only in terms of the two identification mentioned at the beginning of 
Sect.~\ref{sect:critmu}. The passage from Hermitean to skew-Hermitean matrices by means of the multiplication by $\sqrt{-1}$ makes $\Fl^\CC$ into the adjoint orbit of an element 
in ${\frak u}(n)$ which is transformed by $\sigma$ into its negative.  Consequently, the objects mentioned in the statement above fit into the situation described by \cite[Example 1.9]{SO}, see also \cite[Example 1.9]{S}, the only difference 
being the moment map, which in our case is not merely $\mu_\CC$. Therefore, it only remains to verify that
\begin{align*}
{}& \tilde{\mu} \circ \sigma = -\phi_* \circ \tilde{\mu},
\end{align*} 
where $\phi_*$ is the differential of $\phi$ at the identity element of $T$. 
This is equivalent  to
$$\mu_\CC\circ \sigma - x_0 = -(\phi_* \circ \mu_\CC - \phi_*( x_0)),$$
which is clearly true, first because $\mu_\CC\circ \sigma = -\phi_* \circ \mu_\CC$ and second because $\phi_*$ transforms every element of ${\mathfrak t}$ into its negative. 
\end{proof}

Due to the result in the lemma, one can now proceed as follows. Consider the restriction of $f$ to $\Fl^\RR$, that is, $f_r:  \Fl^\RR \to \RR$, $f_r(V_\bullet) = \|\mu_\RR(V_\bullet) - x_0\|^2$, for all
$V_\bullet \in \Fl^\RR$. By  \cite[Prop.~3]{BaHe} combined with Sec.~\ref{complex} above, we have:
\begin{itemize}
\item[(a)] The critical set of $f_r$ is  
$${\rm Crit}(f_r) = \bigsqcup_{(\A,[\vk])\in {\bf B}} Z^\sigma_{{\mathcal A}, \vk}\cap \tilde{\mu}^{-1}(\tilde{\beta}_{{\mathcal A}, \bar{k}}),$$
where  $\tilde{\beta}_{{\mathcal A}, \bar{k}}$ is given by Eq.~(\ref{betatil})  and
 $$Z^\sigma_{{\A,[\overline{k}]}}= \bigcup_{\overline{\ell}\sim\vk}( \prod_{s=1}^m {\rm F}\ell^\RR_{\ell_{1,s}, \ell_{2,s}, \ldots, \ell_{r,s}}(\RR^{A_s})).$$ 
\item[(b)] There exists a stratification
 $${\rm F}\ell^\RR = \bigsqcup_{(\A, [\vk]) \in {\bf B}}S^\sigma_{\A, \vk}$$
 where {\bf B} was defined immediately after Lemma \ref{critset},  with the following properties:
 \begin{itemize}
 \item[$\bullet$]  For all $(\A, [\vk]) \in {\bf B}$, $S^\sigma_{\A, [\vk]}= S_{\A, [\vk]} \cap \Fl^\RR$, this being a submanifold of ${\rm F} \ell^\RR$ whose codimension is half the codimension of $S_{\A, [\vk]}$ in $\Fl^\CC$.  
\item[$\bullet$] For all $(\A, [\vk]) \in {\bf B}$, $S^\sigma_{\A, [\vk]}$ contains $Z^\sigma_{{\A,[\overline{k}]}} \cap \tilde{\mu}^{-1}(\tilde{\beta}_{\A,\overline{k}})$ as a deformation retract.
 \item[$\bullet$] Relative to the partial order mentioned in Sect.~\ref{complex},  the closure of any  stratum $S^\sigma_{\A, [\vk]}$   is contained in the union of that stratum and all strata whose indices precede $(\A, [\vk])$ in that order. 
 \item[$\bullet$] There exists exactly one stratum of  codimension equal to 0, namely $S^\sigma_{\A_0, [\vk_0]}$, see Rem.~\ref{betaz}.  It contains $\mu_\RR^{-1}(x_0)$ as a deformation retract. 
 \end{itemize}
\end{itemize}

Assume that $k_i\ge 2$ for all $1\le i\le r$. By Prop.~\ref{codimc}, the codimension of each stratum $S^\sigma_{\A,[\vk]}$ is either 0 or at least 2. We claim that the inclusion
$
\mu_\RR^{-1}(x_0)\hookrightarrow \Fl^\RR
$
is 1-connected.

Indeed, equip the indexing set {\bf B} with the partial order described in Sect.~\ref{complex}, so that the unique stratum of codimension 0 is indexed by the greatest element. Consider first the minimal elements of {\bf B}. By the closure property of the stratification, the corresponding strata are closed in $\Fl^\RR$. Since they are non-open, each has codimension at least 2. Hence, by \cite[Theorem 1.11]{BW}, removing these strata induces a 1-connected inclusion.

We now repeat the argument for the minimal elements of the remaining indexing set. After finitely many steps, since {\bf B} is finite, only the unique codimension-zero stratum remains. Consequently, its inclusion in $\Fl^\RR$ is 1-connected. By the properties of the stratification listed above, this stratum contains
$
\mu_\RR^{-1}(x_0)
$
as a deformation retract. Therefore the inclusion map
$
\mu_\RR^{-1}(x_0)\hookrightarrow \Fl^\RR
$
is 1-connected.
The proof of Thm.~\ref{thmconn1} now follows exactly as the proof of Thm.~\ref{thmconn2}, see the end of Sect.~\ref{complex}.

\subsection{Connectedness of ${\mathcal F}^\RR_{{\rm I}_k, \vec{d}}$}
The same method  can now be used to prove Thm.~\ref{fdn}. Since $S={\rm I}_k$, the relevant flag manifold, as prescribed in Sect.~\ref{sec:mainres}, is the ${\rm O}(n)$-orbit of the $n \times n$ matrix
$$\begin{pmatrix}
{\rm I}_k& 0\\
0 &0
\end{pmatrix}.$$
This is naturally identified with a real Grassmannian, and we denote it by ${\rm Gr}_k(\RR^n)$. We also need the ${\rm U}(n)$-orbit of the same matrix, which we identify with ${\rm Gr}_k(\CC^n)$.  Just like in Lemma \ref{realstr}, the latter Grassmannian, 
the canonical $T$-action, the map $\mu_\CC-\vec{d}$, and the same two involutive automorphisms make together a real Hamiltonian system. 
The key ingredient is again the Kirwan-Morse stratification, this time induced by the function
$
V\longmapsto \|\mu_\CC(V)-\vec{d}\|^2,
$
$V\in {\rm Gr}_k({\mathbb R}^n)$. 
It has the properties listed in Sect.~\ref{sectreal1} above.  Although it was already constructed in \cite[Sect.~3]{Ma1}, we would like to provide more details concerning it in terms of the framework we have been using here, since Lemma \ref{critset} is not applicable any more. Set $k_1:=k$ and $k_2:=n-k$. This time we need $\tilde{\mu} : {\rm Gr}_k(\mathbb C^n) \to \RR^n$, $\tilde{\mu}=\mu_\CC-\vec{d}$. We search for all $\tilde{\beta}\in \RR^n$ which lie in the $\tilde{\mu}$-image
of a critical component of $\tilde{\mu}_{\tilde{\beta}}$. Just like in Sec.~\ref{sect:critmu}, $\tilde{\beta}$ gives rise to a partition ${\mathcal A}$ and to a matrix
$\overline{k}$, such that the conditions in (\ref{eq:matching}) are satisfied, with $r=2$. For all $1\le s\le m$, the components of $\tilde{\beta}$ of index in $A_s$ are pairwise equal, their common value being this time equal to
$$\frac{1}{n_s}(k_{1,s}-\sum_{i\in A_s}d_{i}).$$ 
Thus
$$\tilde{\beta}= \sum_{s=1}^m \frac{k_{1,s}-\sum_{i\in A_s}d_{i}}{n_s}\sum_{j\in A_s}e_j,$$
the numbers 
$$\frac{k_{1,s}-\sum_{i\in A_s}d_{i}}{n_s}, \quad 1\le s\le m$$
being pairwise distinct. For any such $\tilde{\beta}$, the critical components of $\tilde{\mu}_{\tilde{\beta}}$ which contain $\tilde{\beta}$ in its image are of the form
$$\prod_{s=1}^m \Fl^\CC_{\ell_{1,s}, \ell_{2,s}}(\CC^{A_s})$$
where $(\ell_{i,s})_{1\le i \le 2, 1\le s\le m}$ are integers satisfying (\ref{eq:matching}) and moreover
$$\frac{\ell_{1,s}-\sum_{i\in A_s}d_{i}}{n_s}=\frac{k_{1,s}-\sum_{i\in A_s}d_{i}}{n_s}, \quad 1\le s\le m.$$
On the other hand, (\ref{eq:matching}) implies
$$\ell_{1,s}+\ell_{2,s} =k_{1,s}+k_{2,s}, \quad 1\le s\le m.$$
Consequently, $\ell_{i,s}=k_{i,s}$ for all $1\le i \le 2$ and $1\le s\le m$. Thus in this case there is exactly one critical component of $\tilde{\mu}_{\tilde{\beta}}$ which contains
$\tilde{\beta}$. Recall that for $r\ge 3$, this is in general no longer the case, see Rem.~\ref{rem:more}.

Now  the desired stratification of ${\rm Gr}_k(\RR^n)$ arises from a direct application of the results from \cite{BaHe} which were mentioned above. 
It is shown in \cite[Prop.~4.1]{Ma1}  that  every non-open stratum has codimension at least 2.
They key-result is again that the inclusion map 
$
\mu_\RR^{-1}(\vec{d})\hookrightarrow
{\rm Gr}_k({\mathbb R}^n)
$
is 1-connected: this follows from the properties of the stratification, by the procedure exposed in the previous subsection.
To conclude, one applies Thm.~\ref{maininstr} (a). 

\section{Examples}

\begin{ex} \label{ex:nonconn} As mentioned in the introduction, the space ${\mathcal F}^\RR(S, n)$ is in general not path-connected. 
For example, this happens if $k=n-1$ and $S={\rm I}_k$, see \cite[Sect.~3]{DS}.
\end{ex}
\begin{ex} \label{ex:nonconn0}
Another  example where ${\mathcal F}^\RR(S, n)$ is not path-connected can be obtained by taking
 $k=2, n=3$, and
$$S={\rm Diag}(1,3).$$ Here is a proof. 
Write $f_i=(a_i, b_i)^t$, $1\le i \le 3$. The corresponding frame space is the subspace of $\RR^6$ determined by the equations
\begin{align*}
{}& a_i^2+b_i^2= \frac{4}{3}, \ {\rm for \ all } \ 1\le i \le 3, \\
{}& a_1^2+a_2^2+a_3^2=1, \\
{}& b_1^2+b_2^2+b_3^2=3, \\
{}& a_1b_1+a_2b_2+a_3b_3=0.
\end{align*}
Observe that whenever these conditions are satisfied, $b_1 \neq 0$.
This is because
$$b_1^2=3-b_2^2-b_3^2\ge 3-2\cdot \frac{4}{3}=\frac{1}{3}.$$
On the other hand, let us pick $F_0\in {\mathcal F}^\RR(S, 3)$. Clearly, $-F_0\in {\mathcal F}^\RR(S, 3)$ too. 
Since the function which associates to $F\in {\mathcal F}^\RR(S, 3)$ the component $b_1$ is continuous, we deduce that ${\mathcal F}^\RR(S, 3)$ is not connected, and thus not path-connected. 
\end{ex}

\begin{ex} \label{ex:nonconn1} The space ${\mathcal F}^\RR(S, n)$ can be path-connected even for $k=n-1$ (cf.~Ex.~\ref{ex:nonconn}), by choosing $S$ appropriately. To see this,
take again $k=2, n=3$, and this time
$$S={\rm Diag}(1,2).$$
With the notations established above, we are given six real numbers $a_i, b_i$ for $i = 1, 2, 3$. Let us denote these as  vectors in $\mathbb{R}^3$:
\[
\vec{a} = (a_1, a_2, a_3) \quad \text{and} \quad \vec{b} = (b_1, b_2, b_3).
\]
The frame space is determined by the following constraints:
\begin{enumerate}
    \item[$\bullet$] $a_i^2 + b_i^2 = 1$ for all $i \in \{1, 2, 3\}$
    \item[$\bullet$] $\|\vec{a}\|^2 = \sum_{i=1}^3 a_i^2 = 1 \implies \|\vec{a}\| = 1$
    \item[$\bullet$] $\|\vec{b}\|^2 = \sum_{i=1}^3 b_i^2 = 2 \implies \|\vec{b}\| = \sqrt{2}$
    \item[$\bullet$] $\vec{a} \cdot \vec{b} = \sum_{i=1}^3 a_i b_i = 0$.
\end{enumerate}
We will prove as follows:
\begin{enumerate}
    \item[{}] - At least one of the components $a_i$ must be equal to $0$.
    \item[{}] - The resulting solution subspace of $\mathbb{R}^6$ is path-connected.
\end{enumerate}

{\it Step 1}: At least one of $a_1, a_2$, and $a_3$  is equal to 0.

Since $\vec{a}$ and $\vec{b}/\sqrt{2}$ are orthonormal, the vector
\[
\vec{c} = \vec{a} \times \frac{\vec{b}}{\sqrt{2}}
\]
is the third vector completing them to an orthonormal basis.
Hence the matrix
\[
\begin{pmatrix}
a_1 & a_2 & a_3 \\
\frac{b_1}{\sqrt{2}} & \frac{b_2}{\sqrt{2}} & \frac{b_3}{\sqrt{2}} \\
c_1 & c_2 & c_3
\end{pmatrix}
\]
is in ${\rm O}(3)$.
 Consequently, its columns are also  orthonormal and thus
\[
a_i^2 + \frac{b_i^2}{2} + c_i^2 = 1, \quad 1\le i \le 3.
\]
Substitute  $a_i^2=1-b_i^2$ and  get:
\[
c_i^2  = \frac{b_i^2}{2},
\]
$1\le i \le 3$. 
Evaluating the first component of the cross product $\vec{c} = \vec{a} \times \frac{\vec{b}}{\sqrt{2}}$ yields:
\[
c_1 = \frac{1}{\sqrt{2}}(a_2 b_3 - a_3 b_2).
\]
Squaring both sides and substituting $c_1^2 = \frac{b_1^2}{2}$ leads to:
\[
 b_1^2 = a_2^2 b_3^2 + a_3^2 b_2^2 - 2a_2 a_3 b_2 b_3.
\]
Substitute $b_1^2 = 1 - a_1^2 = a_2^2 + a_3^2$ on the left side, and $b_i^2 = 1 - a_i^2$ on the right side:
\begin{align*}{}&
a_2^2 + a_3^2 = a_2^2(1 - a_3^2) + a_3^2(1 - a_2^2) - 2a_2 a_3 b_2 b_3\\
{}& = a_2^2 + a_3^2 - 2a_2^2 a_3^2 - 2a_2 a_3 b_2 b_3,
\end{align*}
and thus 
\[
 a_2 a_3(a_2 a_3 + b_2 b_3) = 0.
\]
By cyclic permutations of indices  we obtain the following three equations:
\begin{align*}
    a_2 a_3(a_2 a_3 + b_2 b_3) &= 0 \\
    a_3 a_1(a_3 a_1 + b_3 b_1) &= 0 \\
    a_1 a_2(a_1 a_2 + b_1 b_2) &= 0.
\end{align*}
Assume now that $a_1 \neq 0$, $a_2 \neq 0$, and $a_3 \neq 0$. Then clearly
\[
b_2 b_3 = -a_2 a_3, \quad b_3 b_1 = -a_3 a_1, \quad b_1 b_2 = -a_1 a_2.
\]
Multiplying these three equations together gives:
\[
(b_1 b_2 b_3)^2 = (-1)^3 (a_1 a_2 a_3)^2 = 
-(a_1 a_2 a_3)^2.
\]
The right-hand side is strictly negative, whereas  $(b_1 b_2 b_3)^2 \ge 0$, which produces a contradiction. 
Thus at east one component $a_i$ must be equal to $0$, as required.

 {\it Step 2}: the actual proof of  path connectedness.

For brevity, denote  ${\mathcal F}:={\mathcal F}({\rm Diag}(1,2), 3)$, which is regarded as a subspace of  $\mathbb{R}^6$. From Step 1, its projection  onto the $\vec{a}$-coordinates defines a space $B \subset S^2$, which consists of three  great circles:
\[
B = \{ \vec{a} \in S^2 : a_1=0 \} \cup \{ \vec{a} \in S^2 : a_2=0 \} \cup \{ \vec{a} \in S^2 : a_3=0 \}.
\]

Next, we analyze the allowed vectors $\vec{b}$ along  each of the three circles.
We  divide ${\mathcal F}$ according to which of $a_1, a_2$, or $a_3$ is equal to 0:

If $a_1 = 0$, then  $a_2^2 + a_3^2 = 1$ and $b_1 = \pm 1$. Because $a_2 b_2 + a_3 b_3 = 0$, we deduce that
$b_2=\pm a_3$ and $b_3=\mp a_2$.  The corresponding point in ${\mathcal F}$ is
in one of the following four subspaces:
    $$
    S_1(\epsilon_1, \epsilon_2) = \{(0, a_2, a_3,  \epsilon_1, \, \epsilon_2 a_3, \, -\epsilon_2 a_2 ) \mid a_2^2+a_3^2=1\},
    $$
where $\epsilon_1, \epsilon_2 \in \{-1,1\}$.
Similarly, if $a_2=0$, the point is in 
$$
    S_2(\delta_2, \delta_1) = \{( a_1, 0, a_3, \delta_1 a_3, \, \delta_2, \, -\delta_1 a_1 ) \mid a_1^2+a_3^2=1\},
    $$
and if $a_3=0$, it is in
$$
    S_3(\gamma_3, \gamma_1) = \{ (a_1, a_2, 0,  \gamma_1 a_2, \, -\gamma_1 a_1, \, \gamma_3 ) \mid a_1^2+a_2^2=1\},
    $$
where $\delta_1, \delta_2, \gamma_1, \gamma_3 \in \{-1,1\}$. 
We conclude that
$${\mathcal F}= \Big(\bigcup_{\epsilon_1, \epsilon_2 \in \{- 1, 1\} } S_1(\epsilon_1, \epsilon_2)\Big)
\bigcup \Big(\bigcup_{\delta_2, \delta_1 \in \{ -1, 1\}} S_2(\delta_2, \delta_1)\Big)
\bigcup \Big(\bigcup_{\gamma_3, \gamma_1 \in \{-1,1\}} S_3(\gamma_3, \gamma_1)\Big).
$$
Note that each of the twelve components in the union above is path-connected, being homeomorphic to the circle $S^1$. The union itself is path-connected provided that we dispose
of enough many intersection points. Here is a list of such points:

\begin{enumerate}
    \item Take $a_1=0, a_2=0, a_3=1$ and observe that
    \begin{align*}
        S_1(\epsilon_1, \epsilon_2) \Big|_{a_3=1, a_2=0} &= (0,0,1, \epsilon_1, \epsilon_2, 0) \\
        S_2(\delta_2, \delta_1) \Big|_{a_3=1, a_1=0} &= (0,0,1,\delta_1, \delta_2, 0).
    \end{align*}
    Setting $\delta_1 = \epsilon_1$ and $\delta_2 = \epsilon_2$ shows that  $S_1(\epsilon_1, \epsilon_2)$ and $S_2(\epsilon_2, \epsilon_1)$  intersect at the  point $(0, 0, 1, \epsilon_1, \epsilon_2, 0)$.
    
    \item 
  Take $a_1=0, a_2=1, a_3=0$ and observe that
    \begin{align*}
        S_1(\epsilon_1, \epsilon_2) \Big|_{a_2=1, a_3=0} &= (0,1,0,\epsilon_1, 0, -\epsilon_2) \\
        S_3(\gamma_3, \gamma_1) \Big|_{a_2=1, a_1=0} &= (0,1,0,\gamma_1, 0, \gamma_3).
    \end{align*}
    Thus $S_1(\epsilon_1, \epsilon_2)$ and $S_3(-\epsilon_2, \epsilon_1)$ intersect at the  point $(0, 1, 0, \epsilon_1, 0, -\epsilon_2)$.
    
    \item Take  $a_1=1, a_2=0, a_3=0$ and this time note that
    \begin{align*}
        S_2(\delta_2, \delta_1) \Big|_{a_1=1, a_3=0} &= (1,0,0, 0, \delta_2, -\delta_1) \\
        S_3(\gamma_3, \gamma_1) \Big|_{a_1=1, a_2=0} &= (1,0,0, 0, -\gamma_1, \gamma_3).
    \end{align*}
    Thus $S_2(\delta_2, \delta_1)$ and $S_3(-\delta_1, -\delta_2)$ intersect at the  point $(1, 0, 0, 0, \delta_2, -\delta_1)$.
\end{enumerate}
Connectedness of ${\mathcal F}$ now follows from the fact that the following intersections are non-empty:
\begin{align*}
{}&  S_1(1, 1) \cap  S_2(1,1), & S_2(1, 1) \cap  S_3(-1, -1), & {} &  S_3(-1, -1) \cap S_1(-1, 1),  & {}& S_1(-1, 1) \cap S_2(1, -1),  \\ 
{}& S_2(1, -1) \cap  S_3(1, -1),  &  S_3(1, -1) \cap  S_1(-1, -1),  & {}& S_1(-1, -1) \cap  S_2(-1, -1),   & {}& S_2(-1, -1) \cap  S_3(1, 1), \\
 {}& S_3(1, 1) \cap  S_1(1, -1), & S_1(1, -1) \cap  S_2(-1, 1) ,  & {}& S_2(-1, 1) \cap  S_3(-1, 1) ,  & {}& S_3(-1, 1) \cap  S_1(1, 1). 
\end{align*}
(A path from $S_1(1, 1)$ to itself going through all $S_i(\epsilon, \delta)$, $1\le i \le 3$, $\epsilon, \delta \in \{-1,1\}$ can be obtained.)
\end{ex}

\begin{ex} \label{ex:nonsconn} The space ${\mathcal F}^\CC(S, n)$ is in general not simply connected. To illustrate this, we take again 
$k=2, n=3$, and, just like in Ex.~\ref{ex:nonconn0}, 
$$S={\rm Diag}(1,3).$$
The relevant flag manifold is this time $\Fl^\CC={\rm U}(3).{\rm Diag}(1,3,0)$ together with the matrix-diagonal projection map $\mu=\mu_\CC: \Fl^\CC\to \RR^3$.
Note that $\mu$ is also a moment map corresponding to the canonical action of
the 3-torus $$\{{\rm Diag} (t_1, t_2, t_3) \mid t_1, t_2, t_3\in \CC, |t_1|=|t_2|=|t_3|=1\}$$ on $\Fl^\CC$.
 Like in Sec.~\ref{subsec:r}, (more precisely, Lemma \ref{gbundle}) one can construct a principal bundle
\begin{equation}\label{s1t}S^1\times S^1 \to  {\mathcal F}^\CC (S, 3) \to \mu^{-1}(x_0),\end{equation}
where $x_0=\left(\frac{4}{3}, \frac{4}{3}, \frac{4}{3}\right).$
Let us now consider the following piece of the corresponding long exact homotopy sequence:
$$\pi_1({\mathcal F}^\CC (S, 3) )\to \pi_1(\mu^{-1}(x_0)) \to \pi_0(S^1\times S^1).$$
Since $S^1\times S^1$ is connected,  the first map in the sequence is surjective. 
We will now show that $\pi_1(\mu^{-1}(x_0))\neq 0$, and then $\pi_1({\mathcal F}^\CC (S, 3) )$ must be non-zero as well. 
To this end, consider
the torus $$T^2=\{{\rm Diag} (t_1, t_2, t_3) \mid t_1, t_2, t_3\in \CC, |t_1|=|t_2|=|t_3|=1, t_1t_2t_3=1\}.$$
Let also consider its subgroup $\ZZ_3=\{\varepsilon {\rm I}_3 \mid \varepsilon \in \CC, \varepsilon^3=1\}$. The quotient
$T^2/\ZZ_3$ 
 acts in Hamiltonian fashion on $\Fl^\CC$, a moment map being $\mu_{T^2}:=\mu-x_0$.

\noindent {\it Claim.} The action of $T^2$ on $\mu_{T^2}^{-1}(0)=\mu^{-1}(x_0)$ is locally free and $\mu^{-1}(x_0)/T^2$ is a 2-manifold. 

\noindent Take $X \in \mu^{-1}(x_0)$ and $A={\rm Diag}(t_1, t_2, t_3)$ such that $AXA^{-1}=X$, that is,
$AX=XA$. Say that $X=(X_{ij})_{1\le i, j\le 3}$. Then
$$(AX-XA)_{ij}= (t_i-t_j)X_{ij}.$$
We cannot have $X_{12}=0$ and $X_{13}=0$ since if so, $X$ being Hermitian, we also have
$X_{21}=X_{31}=0$, and thus $X$ has $\frac{4}{3}$ as eigenvalue, which is a contradiction (the eigenvalues of $X$ are 0, 1, and 3). One deduces that $t_1=t_2$ or $t_1=t_3$. Similarly, $t_2=t_1$ or $t_2=t_3$ and also 
$t_3=t_1$ or $t_3=t_2$. Since $t_1=t_2=t_3$, the $T^2$-stabilizer of $X$ is the copy of $\ZZ_3$ defined above.  Thus the  action of $T^2/\ZZ_3$ is free, and therefore $0$ is a regular value of $\mu_{T^2}$, see \cite[Sect.~23]{ACS}. It follows that  $\mu^{-1}(x_0)/T^2$ is actually the quotient of $\mu^{-1}(x_0)$ by the {\it free} action of $T^2/\ZZ_3$ and is therefore smooth. A dimension count shows that the dimension of this quotient as a manifold is equal to 2.

\noindent By 
\cite[Theorem 1.2]{HL}, one can compute the fundamental group of the symplectic quotient as $\pi_1(\mu^{-1}(x_0)/T^2)\simeq \pi_1(\Fl^\CC)=\{0\}.$ Thus
$\mu^{-1}(x_0)/T^2$ is diffeomorphic to $S^2$. Consider the following piece of the corresponding long exact homotopy sequence:
$$\pi_2(S^2)\simeq \ZZ  \to \pi_1(T^2)\simeq \ZZ\times \ZZ  \to \pi_1(\mu^{-1}(x_0)).$$
Since the image of the first map has rank at most 1, it cannot equal $\pi_1(T^2)$. Hence the first map is not surjective. By exactness, the second map has non-zero image and therefore  $\pi_1(\mu^{-1}(x_0)) \neq 0$, as desired. 
\end{ex}

\begin{ex} \label{ex:last} We now want to illustrate Thm.~\ref{fdn}. A class of frame spaces which satisfy the assumption (\ref{eqndi})  was previously obtained by Caine, Needham, and Shonkwiler in \cite{CNS}.
To describe it, first order $d_1, \ldots, d_n$ by means of a permutation 
 $\sigma\in S_n$, that is,
 $$d_{\sigma(1)} \ge d_{\sigma(2)} \ge \cdots \ge d_{\sigma(n)}.$$
 As a special case of \cite[Thm.~4.5]{CNS},  the space ${\mathcal F}^\RR_{{\rm I}_k, \vec{d}}$ is path-connected provided that $d_1, \ldots, d_n \in {\mathbb Q}$ and 
 $$\frac{k}{n\ell}\cdot {\rm min} \{j \mid \sum_{i=1}^j d_{\sigma(i)} >\ell \} \ge \frac{k}{n}\cdot \frac{k+1}{k-1},$$
 for all $\ell =1, \ldots, k-1$. (It is also required that $k\ge 2$, but this follows using the argument below and \cite[Rem.~1.2]{Ma1}.) 
For $\ell = k-1$ one obtains
$$  {\rm min} \{ j \mid \sum_{i=1}^j  d_{\sigma(i)} > k-1\}\ge k+1.$$
This implies
$$ \sum_{i=1}^k  d_{\sigma(i)} \le k-1.$$
Since 
 $\sum_{i=k+1}^n d_{\sigma(i)} = k-\sum_{i=1}^k d_{\sigma(i)}$, we must have 
 $$ \sum_{i=k+1}^n d_{\sigma(i)}  \ge 1.$$
 Notice now that this inequality is equivalent to (\ref{eqndi}). 
 \end{ex}


\begin{thebibliography}{Fo-Ge-Po}

\bibitem{BaHe} T.~Baird and N.~Heydari, {\it Cohomology of quotients in real symplectic geometry}, Alg.~Geom.~Topology {\bf 22} (2022), 3249-3276


\bibitem{CNS} A.~Caine, T.~Needham, and C.~Shonkwiler, {\it Optimization and topology of spaces of Parseval frames}, SIAM J.~Matrix Anal.~Appl.~{\bf 47} (2026), 1375-1399

\bibitem{ACS} Ana Cannas da Silva, {\it Lectures on Symplectic Geometry}, Lecture Notes in Mathematics, vol.~1764, Springer-Verlag, Berlin, 2001



\bibitem{CMS} J.~Cahill, D.~Mixon, and N.~Strawn, {\it Connectivity and irreducibility of algebraic varieties of finite unit norm tight frames}, SIAM J.~Appl.~Algebra Geom.~{\bf 1} (2017), 38-72

\bibitem{DS} K.~Dykema and N.~Strawn, {\it Manifold structure of spaces of spherical tight frames}, Int.~J.~Pure Appl.~Math.~{\bf 28} (2006), 
 217-256
 
\bibitem{Gray} B.~Gray, {\it Homotopy Theory - An Introduction to Algebraic Topology}, Pure and Applied Mathematics Series, vol.~64, Academic Press, 1975


\bibitem{Ho} A.~Horn, {\it Doubly stochastic matrices and the diagonal of a rotation matrix}, American Journal of Mathematics {\bf 76} (1954), 620-630

\bibitem{Ki} F.~C.~Kirwan, {\it Cohomology of Quotients in Symplectic and Algebraic Geometry}, Mathematical
Notes, vol.~31, Princeton Univ.~Press, New Jersey, 1984

\bibitem{Le} E.~Lerman, {\it Gradient flow of the norm squared of a moment map}, Enseign.~Math.~{\bf 51} (2005), 117-127

\bibitem{HL} H.~Li, {\it The fundamental group of symplectic manifolds with Hamiltonian Lie group actions}, J.~Symplectic Geom.~{\bf 4} (2006), 345-372 



\bibitem{Ma1} A.-L.~Mare, {\it Connectivity properties of the Schur-Horn map for real Grassmannians}, Abh.~Math.~Semin.~Univ.~Hambg.~{\bf 94} (2024), 33-55

\bibitem{Ma2} A.-L.~Mare, {\it Simply connectedness of spaces of tight frames}, Beitr.~Algebra Geom., in press,
DOI: https://doi.org/10.1007/s13366-026-00859-5

\bibitem{NS} T.~Needham and C.~Shonkwiler, {\it Symplectic geometry and connectivity of spaces of frames},  Adv.~Comput.~Math.~{\bf 47} (2021), no.~1, 5

\bibitem{SO} L.~O’Shea and R.~ Sjamaar, {\it Moment maps and Riemannian symmetric pairs},
Math.~Ann.~{\bf 317} (2000), 415-457 



\bibitem{Schur} I.~Schur, {\it \"Uber eine Klasse von Mittelbildungen mit Anwendungen auf die Determinantentheorie}, 
Sitzungsber.~Berl.~Math.~Ges.~{\bf 22} (1923), 9-20

\bibitem{S} R.~Sjamaar, {\it  Real symplectic geometry},
Afr.~ Diaspora J.~Math.~{\bf 9} (2010), 34-52



 

\bibitem{Wa} S.~Waldron, {\it An Introduction to Finite Tight Frames},
Applied and Numerical Harmonic Analysis, Birkh\"auser/Springer, New York, 2018


\bibitem{BW} B.~Williams, {\it Connectivity of manifold complements}, preprint available at
\url{https://personal.math.ubc.ca/~tbjw/ConnectivityOfManifoldComplements.pdf}

\end{thebibliography}
\end{document}